\numberwithin{equation}{section} 
\newtheorem{theorem}{Theorem}[section] 
\newtheorem{proposition}[theorem]{Proposition} 
\newtheorem{corollary}[theorem]{Corollary} 
\newtheorem{defn}[theorem]{Definition}
\theoremstyle{definition}
\newtheorem{example}[theorem]{Example} 
\newtheorem{remark}[theorem]{Remark}
\newcommand{\R}{\mathbb{R}}
\newcommand{\C}{\mathbb{C}}
\newcommand{\N}{\mathbb{N}}
\newcommand{\Q}{\mathbb{Q}}
\newcommand{\PP}{\mathbb{P}}
\newcommand{\G}{{\rm Gr}}
\title{On the convex hull of a space curve}
\author{Kristian Ranestad}
\address{Kristian Ranestad \\
Matematisk Institutt \\
      Universitetet I Oslo \\
PO Box 1053 \\
Blindern, NO-0316 Oslo, Norway}
\email{ranestad@math.uio.no}
\urladdr{http://www.math.uio.no/\~{}ranestad/}
\author{Bernd Sturmfels} 
\address{Bernd Sturmfels\\ 
Department of Mathematics\\ 
      University of California\\ 
      Berkeley, California 94720,
      USA} 
\email{bernd@math.berkeley.edu} 
\urladdr{http://www.math.berkeley.edu/\~{}bernd} 
\begin{document} 

\begin{abstract}
The boundary of the convex hull of a compact algebraic curve
in real $3$-space defines a real algebraic surface. For 
general curves, that boundary 
surface is reducible, consisting of tritangent planes and a scroll of stationary bisecants. We express the
degree of this surface in terms of the degree, genus and 
singularities of the curve. We present algorithms
for computing their defining polynomials, and we exhibit a 
wide range of
examples.
\end{abstract} 

\maketitle

\section{Introduction}

The convex hull of an algebraic space curve in $\R^3$
is a semi-algebraic convex body. The aim of this
article is to examine the boundary surface of such a convex body
using methods from (computational) algebraic geometry.
We shall illustrate our questions and results by way
of a simple first example. Consider the
trigonometric space curve defined parametrically~by 
\begin{equation}
\label{eq:runningcurve1}
x = {\rm cos}(\theta)\,, \,\,       y = {\rm sin}(2\theta)\, , \,\, z = {\rm cos}(3\theta).  
\end{equation}
This is an algebraic curve of degree $6$
cut out by intersecting two surfaces of degree $2$ and $3$:
\begin{equation}
\label{eq:runningcurve2} x^2-y^2-xz \,\,\, = \,\,\,
z-4 x^3+3 x \,\,\, = \,\,\, 0 .
\end{equation}
Figure~\ref{fig:frank}
shows a picture  (due to Frank Sottile \cite[Fig.~3]{SSS}) of
the convex hull of the~curve.
\begin{center}
\begin{figure}
\vskip -0.7cm
\includegraphics[width=6cm]{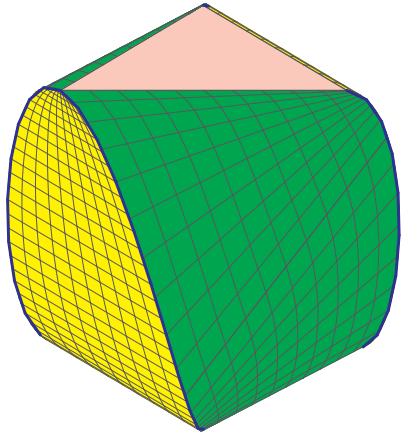}
\vskip -0.4cm
\caption{The convex hull of the curve $({\rm cos}(\theta), {\rm sin}(2\theta),
{\rm cos}(3\theta))$ has
two triangles and two non-linear surfaces patches
of degree $3$ and $16$ in its boundary.}
\label{fig:frank}
\end{figure}
\end{center}

This picture shows the facets (maximal faces) of this convex body.
There are two $2$-dimensional facets, namely the triangles
whose vertices have parameters $\theta = 0,\frac{2\pi}{3}, \frac{4\pi}{3}$ and
$\theta = \frac{\pi}{3}, \pi, \frac{5\pi}{3}$.
Further, we see two one-dimensional families of $1$-dimensional facets.
These sweep out the yellow surface and 
the green surface in Figure \ref{fig:frank}.  
The boundary  is the union of the two triangles and the two surfaces.
Each triangle lies in a tritangent plane to the curve, while the $1$-dimensional facets are segments in stationary bisecant lines.
The union of all stationary bisecant lines is the 
{\em edge surface} of the curve,  as defined in Section 2.  The 
lines in the quadratic cone $\{x^2-y^2-xz=0\}$ are also stationary 
bisecants to $C$, 
so this cone is a third component of the edge surface of $C$,
but it does not contribute to the boundary of its convex hull.
The tritangent planes and the edge surface are 
our primary objects of study.

The problem of computing the convex hull of a space curve
is fundamental for non-linear computational geometry,
but the literature on algorithms is surprisingly sparse. 
One exception is an article on geometric modeling by
Seong {\it et al.} \cite[\S 3]{SEJK} which describes the boundary
in terms of stationary bisecants and tritangents, corresponding to our
surfaces and triangles. 
This description of the boundary surface was also known to 
Sedykh \cite{Sed} who undertook a
detailed study of the singularities arising in the boundary of such a 
$3$-dimensional convex body.

Recent interest in computing convex hulls of algebraic varieties
arose in the theory of semidefinite programming. We refer to the
articles of Gouveia {\it et al.} \cite{GPT}, Henrion \cite{Hen}
and Netzer {\it et al.} \cite{NPS} for details
and references. Their aim is to represent the convex hull 
as the projection of a spectrahedron, or, algebraically,
one seeks to find a lifted LMI representation for the convex hull.
Such a representation always exists when the variety is a rational curve.
This follows from the fact that every non-negative polynomial in one
variable is a sum of squares. Its construction was explained in
\cite[\S 4-5]{Hen} and in \cite[\S 5]{SSS}.

Since trigonometric curves are rational, their convex hull has a lifted LMI representation.
The convex hull of our curve
is the following projection of a $6$-dimensional spectrahedron:
$$
\hbox{{\rm Figure} \ref{fig:frank}} \,\,\, = \,\,\,
\biggl\{ (x,y,z) \in \R^3\,\,| \,\,\exists \,u,v,w \in \R \,:
\,
\begin{pmatrix}
1 & x+ u i & v+y i & z  +  w i\\
x -  u i & 1 & x +  u i & v +  y i \\
v - y i & x -  v i & 1 & x +  u i \\
z - w i & v -  y i & x -   u i & 1 
\end{pmatrix} \,\succeq \, 0\,
\biggr\}.
$$
Here $i = \sqrt{-1}$ and ``$\succeq 0$''
means that this Hermitian $4 {\times} 4$-matrix is positive semidefinite.
A lifted LMI representation solves our problem from the point of view of
convex optimization because it allows us to rapidly
maximize any linear function over the curve. However, that formula is
unsatisfactory from an algebro-geometric perspective because it 
reveals little information about boundary surfaces visible in Figure \ref{fig:frank}.
Our goal is to present practical  tools for computing the 
irreducible polynomials
defining these surfaces, or at least their~degrees.  

It is quite easy to see that the yellow surface in Figure \ref{fig:frank} has degree $3$
and is defined by
\begin{equation}
\label{eq:acubicsurface}
z - 4 x^3 + 3 x  \,\,= \,\, 0. 
\end{equation}
However, it is not obvious
that the green surface has degree $16$ and  its defining polynomial~is

$$
\begin{tiny}
  \begin{matrix}
1024 x^{16}-12032 x^{14} y^2+52240 x^{12} y^4-96960 x^{10} y^6+56160 x^8 y^8+19008 x^6 y^{10}+1296 x^4 y^{12} 
+6144 x^{15} z-14080 x^{13} y^2 z 
\\ -72000 x^{11} y^4 z+149440 x^9 y^6 z+79680 x^7 y^8 z+7488 x^5 y^{10} z+15360 x^{14} z^2 
+36352 x^{12} y^2 z^2+151392 x^{10} y^4 z^2+131264 x^8 y^6 z^2 \\
+18016 x^6 y^8 z^2+20480 x^{13} z^3+73216 x^{11} y^2 z^3+105664 x^9 y^4 z^3+23104 x^7 y^6 z^3+15360 x^{12} z^4+41216 x^{10} y^2 z^4+16656 x^8 y^4 z^4\\
+6144 x^{11} z^5+6400 x^9 y^2 z^5+1024 x^{10} z^6-26048 x^{14}-135688 x^{12} y^2+178752 x^{10} y^4+124736 x^8 y^6-210368 x^6 y^8+792 x^4 y^{10} \\
+5184 x^2 y^{12}+432 y^{14}-77888 x^{13} z+292400 x^{11} y^2 z+10688 x^9 y^4 z-492608 x^7 y^6 z-67680 x^5 y^8 z+21456 x^3 y^{10} z+2592 x y^{12} z \\
-81600 x^{12} z^2-65912 x^{10} y^2 z^2-464256 x^8 y^4 z^2-192832 x^6 y^6 z^2+31488 x^4 y^8 z^2+6552 x^2 y^{10} z^2-40768 x^{11} z^3-194400 x^9 y^2 z^3 \\
-196224 x^7 y^4 z^3+14912 x^5 y^6 z^3+8992 x^3 y^8 z^3-20800 x^{10} z^4-84088 x^8 y^2 z^4-7360 x^6 y^4 z^4+7168 x^4 y^6 z^4-12480 x^9 z^5 \\
-9680 x^7 y^2 z^5+3264 x^5 y^4 z^5-2624 x^8 z^6+760 x^6 y^2 z^6+64 x^7 z^7+189649 x^{12}+104700 x^{10} y^2-568266 x^8 y^4+268820 x^6 y^6 \\
+118497 x^4 y^8-42984 x^2 y^{10}-432 y^{12}+62344 x^{11} z-592996 x^9 y^2 z+421980 x^7 y^4 z+377780 x^5 y^6 z-79748 x^3 y^8 z-18288 x y^{10} z \\
+104620 x^{10} z^2+56876 x^8 y^2 z^2+480890 x^6 y^4 z^2-12440 x^4 y^6 z^2-51354 x^2 y^8 z^2-936 y^{10} z^2+35096 x^9 z^3+181132 x^7 y^2 z^3 \\
+73800 x^5 y^4 z^3-52792 x^3 y^6 z^3-3780 x y^8 z^3-6730 x^8 z^4+52596 x^6 y^2 z^4-19062 x^4 y^4 z^4-5884 x^2 y^6 z^4+y^8 z^4+6008 x^7 z^5 \\
+2516 x^5 y^2 z^5-4324 x^3 y^4 z^5+4 x y^6 z^5+2380 x^6 z^6-1436 x^4 y^2 z^6+6 x^2 y^4 z^6-152 x^5 z^7+4 x^3 y^2 z^7+x^4 z^8-305250 x^{10} \\
+313020 x^8 y^2+174078 x^6 y^4-291720 x^4 y^6+74880 x^2 y^8+84400 x^9 z+278676 x^7 y^2 z-420468 x^5 y^4 z+20576 x^3 y^6 z+40704 x y^8 z \\
-25880 x^8 z^2-76516 x^6 y^2 z^2-148254 x^4 y^4 z^2+77840 x^2 y^6 z^2+5248 y^8 z^2-29808 x^7 z^3-49388 x^5 y^2 z^3+23080 x^3 y^4 z^3 \\
+14560 x y^6 z^3+14420 x^6 z^4-7852 x^4 y^2 z^4+9954 x^2 y^4 z^4+568 y^6 z^4+848 x^5 z^5+92 x^3 y^2 z^5+1164 x y^4 z^5-984 x^4 z^6+724 x^2 y^2 z^6 \\
-2 y^4 z^6+112 x^3 z^7-4 x y^2 z^7-2 x^2 z^8+140625 x^8-270000 x^6 y^2+172800 x^4 y^4-36864 x^2 y^6-75000 x^7 z+36000 x^5 y^2 z \\ +46080 x^3 y^4 z-24576 x y^6 z-12500 x^6 z^2+49200 x^4 y^2 z^2-19968 x^2 y^4 z^2-4096 y^6 z^2+15000 x^5 z^3-10560 x^3 y^2 z^3 \\
-3072 x y^4 z^3  -2250 x^4 z^4-1872 x^2 y^2 z^4
+768 y^4 z^4-520 x^3 z^5+672 x y^2 z^5+204 x^2 z^6-48 y^2 z^6-24 x z^7+z^8.
\end{matrix}
\end{tiny}
$$

This paper is organized as follows.
In Section 2 we apply known results from  algebraic geometry
to derive formulas for the number of tritangents
and the degree of the edge surface of a smooth curve $C$
of degree $d$ and genus $g$ in $\R^3$. This characterizes the intrinsic
algebraic complexity of computing the
convex hull of $C$. In Section 3 we focus on trigonometric curves,
which are compact of even degree $d$ and genus $g=0$.
We describe an algebraic
elimination method for computing their tritangents and
edge surfaces.
The method will be demonstrated for curves of degree $d=6$,
which have $8$ tritangents and whose edge surface has degree $30$. 
If the curve is singular then that number drops, for 
instance to $16{+} 3 {+} 2$
in the example~above. 

Freedman \cite{Fre}
asked in 1980 whether every generic smooth knotted curve $C$
in $\R^3$ must have a tritangent plane.
Ballesteros-Fuster \cite{BF} and Morton \cite{Mor}
answered this to the negative by constructing
trigonometric curves without tritangents.
We reexamine the Morton curve in Section 4. 
Section 5 offers an in-depth study of
the edge surface from the algebraic geometry
perspective. We establish a refined degree formula
that also works for curves with singularities, and we derive both old and
new results on edge surfaces and their dual varieties.

The {\em second hull} of a knotted space curve,
studied in \cite{CKKS}, is
strictly contained in the convex hull, 
but the algebraic surface defining their boundaries coincide.
Thus, our algebraic recipes not only compute and represent the ordinary
convex hull but they also yield the second hull.

Our study of the convex hull has been extended to higher-dimensional 
varieties in \cite{RS}, which is a sequel to the present article.
The paper \cite{RS} contains a characterization of the 
boundary of the convex hull of a compact real variety in affine space 
in terms of certain multiple strata in the dual variety.  When the 
variety is a space curve, the important special case studied here, the dual variety of the edge surface is a 
double curve on the dual surface of the space curve, while the 
tritangent planes correspond to triple points on the dual surface.

\section{Degree Formula for Smooth Curves}

Let $C$ be a compact smooth real algebraic curve in $\R^3$.
This means that the Zariski closure of $C$ in complex
projective space $\C\PP^3$ is a smooth projective curve, denoted $\bar C$.
We define the {\em degree} and {\em genus} of $C$ to be
the corresponding quantities for the complex curve $\bar C \subset \C\PP^3$:
$$  
d = {\rm degree}(C) := {\rm degree}(\bar C)
\quad \hbox{and} \quad
g = {\rm genus}(C) := {\rm genus}(\bar C).$$
Our object of study is the  convex hull
${\rm conv}(C)$ of the real algebraic curve $C$.
This is a compact, convex, semi-algebraic subset of $\R^3$,
and its boundary $\partial {\rm conv}(C)$ is a 
semi-algebraic subset of pure dimension $2$ in $\R^3$.
We wish to understand the structure of this boundary.

We define the {\em algebraic boundary} of the convex body ${\rm conv}(C)$
to be the $K$-Zariski closure of $\partial {\rm conv}(C)$ in complex
affine space $\C^3$. Here $K$ is the subfield of $\R$ over which the
curve $C$ is defined. The algebraic boundary
is denoted  $\partial_a {\rm conv}(C)$. This complex
surface is usually reducible, and we
identify it with its defining square-free polynomial in $K[x,y,z]$.
Note that the algebraic boundary depends on the choice of the field $K$,
and its degree is understood in the usual sense of algebraic geometry.
All curves $C$ in our examples are defined over $K= \Q$.

Combining the description of the convex hull by Sedykh \cite{Sed} and Seung 
{\it et al.}~\cite{SEJK} with 
enumerative results of De Jonqui\`eres \cite{ACGH}, Arrondo {\it et al.}~\cite{ABT}  
and Johnsen \cite {J}, we shall derive the  following characterization of the 
expected factors of this polynomial and their degrees:

\begin{theorem} \label{thm:degreesmooth}
Let $C$ be a general smooth compact curve  
of degree $d$ and genus $g$ in $\R^3$.
The algebraic boundary $\partial_a {\rm conv}(C)$ 
of its convex hull is the union of the
edge surface and the tritangent planes.
The edge surface is irreducible of degree $\,2(d-3)(d+g-1)$,
and the number of complex tritangent planes equals
$\,8\binom{d+g-1}{3}-8(d{+}g{-}4)(d{+}2g{-}2)+8g-8$.
\end{theorem}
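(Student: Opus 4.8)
The plan is to treat the three assertions separately, all resting on the description of the faces of $\mathrm{conv}(C)$. First I would invoke the boundary analysis of Sedykh \cite{Sed} and Seong \emph{et al.}~\cite{SEJK}: for general $C$, every positive-dimensional face of $\mathrm{conv}(C)$ is either a triangle inside a plane tangent to $C$ at three points (a tritangent plane), or a segment inside a chord $\overline{pq}$ whose endpoints admit a common tangent plane (a stationary bisecant). Taking the $K$-Zariski closure then exhibits $\partial_a\mathrm{conv}(C)$ as the union of the finitely many tritangent planes and the scroll swept by the stationary bisecants, i.e.\ the edge surface. The remaining work is thus (i) the irreducibility and degree of that scroll, and (ii) the count of tritangent planes.

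For the edge surface I would use the key reformulation that $\overline{pq}$ is a stationary bisecant if and only if the tangent lines $T_pC$ and $T_qC$ are coplanar, equivalently meet in $\PP^3$. I would form the Gauss map $\tau\colon C\to\G(1,3)$, $p\mapsto T_pC$, and pull back the incidence divisor $\{(\ell,\ell')\colon \ell\cap\ell'\neq\emptyset\}$ of $\G(1,3)\times\G(1,3)$ to $C\times C$. Since $\tau^*\sigma_1$ has degree equal to the rank $2(d+g-1)$ (Riemann--Hurwitz for projection from a general line), this pullback has class $2(d+g-1)(F_1+F_2)$, where $F_1,F_2$ are the two fiber classes. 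This total class is \emph{not} yet the stationary-bisecant curve $\Gamma$: because $\mathrm{Tan}(C)$ is torsal the diagonal $\Delta$ is forced into the meeting locus, and the two tangent--secant loci $W_1=\{q\in T_pC\}$, $W_2=\{p\in T_qC\}$ (where the chord degenerates to a tangent line lying on $\mathrm{Tan}(C)$, not on the edge surface) are spurious. After subtracting $\Delta$, $W_1$, $W_2$ with their correct multiplicities---equivalently, after invoking the secant-line enumeration of Arrondo \emph{et al.}~\cite{ABT} and Johnsen \cite{J}---one obtains $[\Gamma]$, and the scroll degree is $\tfrac12\,[\Gamma]\cdot M$, where $M$ (of class $d(F_1+F_2)-\Delta$) is the divisor of pairs whose chord meets a fixed general line; this yields $2(d-3)(d+g-1)$. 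Irreducibility of the scroll follows from irreducibility of $\Gamma$, equivalently of the double curve of the dual surface $C^\vee$, which holds for general $C$ by a monodromy argument.

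For the tritangent count I would translate the condition into the complete series $|\mathcal{O}_C(1)|=g^3_d$: a tritangent plane cuts out a divisor $2p_1+2p_2+2p_3+D$ with $\deg D=d-6$, i.e.\ a member of the $g^3_d$ with three double points. Imposing three double points is exactly three conditions on a three-dimensional series, so the count is finite and is computed by the de Jonqui\`eres formula in the form given in \cite{ACGH}. Its leading term reflects $\tfrac1{3!}$ times three independent tangency conditions, each of the tangency class of degree $2(d+g-1)$, producing $8\binom{d+g-1}{3}$; the subtracted terms $8(d{+}g{-}4)(d{+}2g{-}2)$ and $8-8g$ are the coincidence corrections of the formula (strata where two tangency points collide or meet the residual divisor), equivalently the triple-point correction for $C^\vee$.

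The main obstacle is the edge-surface degree. The naive meeting-locus class over-counts, and the entire content lies in pinning down the multiplicity of $\Delta$ (forced by the developability of $\mathrm{Tan}(C)$) and of $W_1,W_2$, so that the spurious contribution is \emph{exactly} $4(d+g-1)$ in the degree and the clean factor $(d-3)$ emerges from the naive value $2(d-1)(d+g-1)$; establishing irreducibility of $\Gamma$ is the second delicate point. By comparison the tritangent count is a careful application of an existing formula, where the only risk is reproducing the lower-order coincidence terms correctly.
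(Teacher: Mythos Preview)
Your overall architecture matches the paper's: Sedykh/Seong for the face structure, de Jonqui\`eres for the tritangent count, and an intersection-theoretic computation for the edge surface degree. The tritangent paragraph is essentially the paper's argument, which sets $a=(2,1)$, $n=(3,d-6)$, $s=3$ in (\ref{eq:dejonquieres}) and reads off the coefficient of $t_1^3t_2^{d-6}$ in $(1+4t_1+t_2)^g(1+2t_1+t_2)^{d-3-g}$.

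For the edge surface degree you take a different route from the paper's self-contained derivation (Proposition~\ref{degree}). There the computation lives on the symmetric square $S_2\bar C$: the class $B$ of the stationary-bisecant curve is pinned down by the two numbers $B\cdot C_p=2(d+g-3)$ (Hurwitz applied to projection from a tangent line) and $B\cdot 2\Delta=4(d+3g-3)$ (the count of stalls from Remark~\ref{stall}), after which $H\cdot B$ is evaluated directly. You instead pull back the incidence divisor of $\G(1,3)\times\G(1,3)$ via the Gauss map to $C\times C$ and propose to strip off spurious components. This is a legitimate alternative and is in fact the geometry behind the determinant (\ref{eq:4x4det}) in Section~\ref{trig}, where the paper observes that the diagonal factor $(x_{p0}x_{q1}-x_{p1}x_{q0})$ appears to the fourth power.

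The gap is that your stripping step is mis-specified and the key multiplicity is asserted rather than established. For a general space curve the loci $W_1=\{q\in T_pC\}$ and $W_2$ are \emph{not} divisorial away from the diagonal: a general tangent line of $\bar C$ meets $\bar C$ only at its point of tangency, so the tangent--secant incidences form a finite set and contribute nothing to the class. The only spurious component of the incidence pullback is the diagonal $\Delta_{C\times C}$ itself, and the entire content of your degree formula is that it occurs with multiplicity exactly~$4$; you state this as ``the spurious contribution is exactly $4(d+g-1)$'' but give no argument beyond deferring to \cite{ABT,J}. A local computation with the affine normal form $t\mapsto(t,t^2/2,t^3/6)$ shows that the $4\times4$ meeting determinant equals $(t-s)^4/12+\cdots$, which supplies the missing step. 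Alternatively, the paper's two intersection numbers on $S_2\bar C$ sidestep the diagonal-multiplicity question entirely, and that is precisely what the symmetric-square approach buys.
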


We first explain some of the terms appearing in the statement,
and then we embark on the proof.
A plane $H$ in $\C\PP^3$ is a {\em tritangent plane} of $\bar C$ if
$H$ is tangent to $\bar C$ at three or more non-collinear points. For general
curves, no plane $H$ can be tangent to four or more points on $\bar C$,
so all tritangents touch the curve $\bar C$ in precisely three points.
In the count above we assume that this is the case.  
In particular, the curve is not a plane curve.
Among the tritangent planes are the affine spans of two-dimensional
facets of ${\rm conv}(C)$, and generically such facets are triangles.
Usually, not all tritangent planes will be defined over
the real numbers, and only a subset of the real tritangent planes
will correspond to  triangle faces of $\partial {\rm conv}(C)$. Moreover, if $C$ is 
defined over $\Q$, we can use symbolic computation to
compute the polynomial that defines the union of all tritangent planes.
This is the {\em Chow form} in (\ref{eq:chowform}) below.
The number of tritangent planes in
Theorem \ref{thm:degreesmooth} is the degree of that Chow form.

We define the {\em edge surface} of $C$ to be the union of all
stationary bisecant lines in the sense 
of Arrondo {\it et al.} \cite[\S 2]{ABT}.
To see what this means, let us consider any point $p$
in the boundary $\partial {\rm conv}(C)$ that does not lie in 
$C$ or in any $2$-dimensional face. Since
every maximal face of a convex body is exposed, the boundary
$\partial {\rm conv}(C)$ is the union of the exposed faces.
We can thus choose a plane $H$ that exposes a face $F$ containing $p$.
The face $F$ is not a polygon  and it is not a vertex since
$p \not\in C$. Therefore $F$ is one-dimensional, $H \cap C$
consists of two points $p_1 $ and $p_2$, and $F$ is the edge
between $p_1$ and $p_2$. The line $L$ spanned by 
$F$ is a {\em stationary bisecant line}. This means that
$L$ is a bisecant line and that the tangent lines
of $C$ at the intersection points $p_1$ and $p_2$ lie in a common plane, namely $H$.
The edge surface may have several components, as we shall
see in Example \ref{ex:ellipspace}.
As for tritangent planes, only a subset of the stationary bisecant 
lines correspond to $1$-dimensional facets of $\partial {\rm conv}(C)$, 
so the edge surface may have components that do not contribute to this boundary. 

Figure~\ref{fig:quartic} shows 
a smooth rational quartic curve $C$ and
its edge surface. Here
$d=4, g=0$, so Theorem \ref{thm:degreesmooth} says that
there are no tritangents and the edge surface has degree six.
The surface is singular along the curve $C$,
and ${\rm conv}(C)$ is visible in the center of the diagram.

\begin{proof}[Proof of Theorem \ref{thm:degreesmooth}]
The number of tritangent planes will be derived
from {\em De Jonqui\`eres' formula} \cite[p.~359]{ACGH} for a smooth complex 
projective curve $\bar C$ of degree $d$ and genus $g$ in $\C\PP^r$.  
Let $a=(a_{1},...,a_{k})$ and $n=(n_{1},...,n_{k})$ be 
vectors of positive integers with $\sum_{i=1}^ka_{i}n_{i}=d$. 
We assume that the $a_{i}$ are distinct and 
that $\,s :=d-\sum_{i=1}^kn_{i} \leq r$.
The set of all  hyperplanes that intersect $C$ in $d-s$ points,
where $n_{i}$ are intersected with  multiplicity $a_{i}$,
is a variety $V_{a,n}$ in the dual space $(\C\PP^r)^*$.
De Jonqui\`eres' formula states the following:
If the dimension of $V_{a,n}$ is $r-s$ then the degree of  $V_{a,n}$ equals
the coefficient of $t_{1}^{n_{1}}\cdots t_{k}^{n_{k}}$ in the polynomial 
\begin{equation}
\label{eq:dejonquieres}
(1+\sum_{i=1}^ka_{i}^2t_{i})^g \cdot (1+\sum_{i=1}^ka_{i}t_{i})^{d-s-g}.
\end{equation}
This formula can be used to investigate the sets of planes 
that are tangent to a space curve $\bar C \subset \C \PP^3$, so first we set $r=3$. 
The variety of planes tangent to $\bar C$ is 
the {\em dual variety} $\bar C^*$ of $C$ and is obtained 
by taking $a=(2,1), n=(1,d-2)$ and $s=1$. In this
way we recover the result that the
dual variety $\bar C^*$ is a surface of degree $2(d+g-1)$. 

The variety of tritangent planes for a  curve
$\bar C \subset \C \PP^3$ is obtained from (\ref{eq:dejonquieres})
by setting $a=(2,1), n=(3,d-6)$ and $s=3$.
The expected dimension of that variety is  $r-s=0$.
So, when the set of tritangent planes is finite then its
cardinality is the coefficient of $t_{1}^3t_{2}^{d-6}$ in 
$$(1+4t_{1}+t_{2})^g \cdot (1+2t_{1}+t_{2})^{d-3-g}.$$
That coefficient is found to be the desired quantity
$\,8\binom{d+g-1}{3}-8(d{+}g{-}4)(d{+}2g{-}2)+8g-8$.

\smallskip

We argued above that the union of all 
$1$-dimensional facets of ${\rm conv}(C)$ is Zariski dense in
a component of the surface of stationary bisecants. This surface is
precisely the edge surface of $C$, as defined above.
 Arrondo {\it et al.} \cite[\S 2]{ABT} study the edge 
surface as the {\em focal surface} of the congruence of bisecant lines or 
secants  to $C$. They derive the degree of this surface
from \cite[Propositions 1.7 and 2.1]{ABT}. 
The desired formula $2(d-3)(d+g-1)$ is stated explicitly in 
the remark prior to Example 2.4 in \cite[page 547]{ABT}.
See also \cite[Remarks 5.1 and 5.2]{J}. 
We also give a  direct derivation in Proposition \ref{degree}.
This concludes the proof of Theorem \ref{thm:degreesmooth}.
\end{proof}

\begin{figure}
\includegraphics[width=8.7cm]{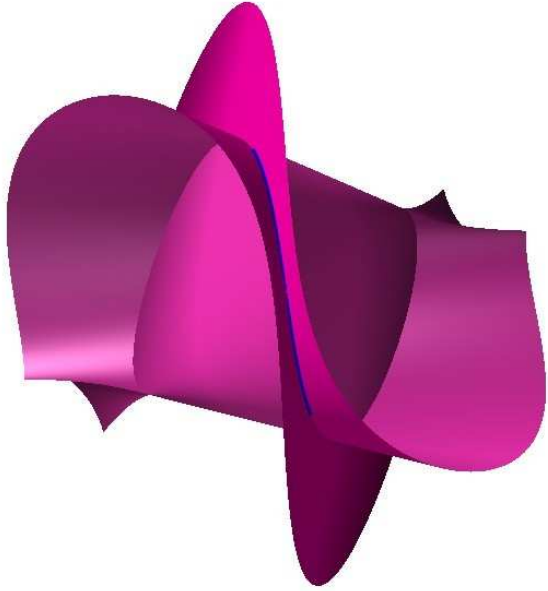}
\vskip -0.7cm
\caption{The edge surface of a rational quartic curve is
irreducible of degree six.}
\label{fig:quartic}
\end{figure}

\begin {remark}\label{stall}
A general space curve $\bar C$ has a finite number of {\em stalls}.
These are planes of third order contact at a point of the curve.
They were studied by Banchoff, Gaffney and McCrory \cite{BGM}.
De Jonqui\`eres' formula with $a=(4,1)$, $n=(1,d-4)$ and $s=3$ 
gives  $4(d+3g-3)$ for the number of stalls.
Related work is Sedykh's classification \cite{Sed}
of six types of singularities.
Each singularity type is exhibited by one of
the two curves in Figures \ref{fig:frank}
and \ref{fig:morton}. \qed
\end{remark}

As noted above, the formulas of Theorem \ref{thm:degreesmooth} do not apply
to plane curves.
For space curves of degree $d \leq 5$, the formulas predict that 
the  number of tritangent planes is zero.

\begin{example}[$d=4, g=1$]  \label{ex:ellipspace}
Consider a compact intersection $\,C = Q_1 \cap Q_2\,$
of two general quadratic surfaces in $\R^3$.
For instance, we could take $Q_1$ and $Q_2$  to be ellipsoids.
The intersection curve $C$ is an {\em elliptic space curve}:
it has genus $g=1$ and degree $d=4$.
According to the formula in Theorem \ref{thm:degreesmooth},
the edge surface of $C$ has degree $8$. 
That surface is not irreducible but is the union of four
quadratic cones. Indeed, the pencil of quadrics $ \,Q_1 + t Q_2\,$
contains precisely four singular quadrics, corresponding
to the four real roots $t_1,t_2,t_3,t_4$ of 
$f(t) = {\rm det}(Q_1 + t Q_2)$.
The rulings of these cones are all stationary bisecants to $C$. Their
union is a surface of degree $8$, and this is
the edge surface of our  elliptic curve $C$.

In algebraic contexts, when $Q_1$ and $Q_2$ have coefficients
in $\Q$, we use symbolic computation to determine
 the algebraic boundary $\,\partial_a {\rm conv}(C)$.
Its defining polynomial is the resultant
$$
\prod_{i=1}^4(Q_1 + t_i Q_2)(x,y,z) 
\,\,= \,\,{\rm resultant}_t \bigl(\, f(t),\, (Q_1+t Q_2)(x,y,z) \,\bigr).
$$
We note that each of the four singular quadrics is the determinant
of a linear symmetric $2 {\times} 2$-matrix  polynomial
$A_i x + B_i y + C_i z + D_i$. Placing these 
matrices along the diagonal in an $8 {\times} 8$ matrix of four
$2 {\times 2}$-blocks, we
obtain a representation of ${\rm conv}(C)$ as 
spectrahedron. \qed
\end{example}

This example shows that the edge surface of a curve
$C \subset \R^3$  can have multiple components even if its
complexification $\bar C \subset \C \PP^3$ is
smooth and irreducible.
We conjecture that at most one of these components is not
a cone. For more information see
 Proposition \ref{comp} below.

\section{Trigonometric Curves and their Edge Surfaces}\label{trig}

By a  {\em trigonometric polynomial} of degree $d$ we mean an expression of the form
\begin{equation}
\label{eq:trigpoly}
f(\theta) \,\,\,=\,\, \,
\sum_{j=1}^{d/2} \alpha_j \,{\rm cos}(j \theta) \,\,+\,\,
\sum_{j=1}^{d/2} \beta_j \,{\rm sin}(j \theta) \,\, + \,\, \gamma.
\end{equation}
Here $d \in \N$ is tacitly assumed to be even and 
the coefficients $\,\alpha_j, \beta_j, \gamma\,$
can be arbitrary real numbers.
We regard $f(\theta)$ as a real-valued function on the unit circle.
A {\em trigonometric space curve} of degree $d$ is
a curve parametrized by three
trigonometric polynomials of degree~$d$:
\begin{equation}
\label{eq:trigcurve}
C \,\, = \,\, \bigl\{\,
\bigl( f_1(\theta), f_2(\theta), f_3(\theta) \bigr) \in \R^3 \,: \,\theta \in [0,2\pi] \,\bigr\} .
\end{equation}
The curve $C$ is the image of the circle under a polynomial map, so it is
clearly compact.

For general coefficients $\alpha_j,\beta_j,\gamma$, the corresponding
complex projective curve $\,\bar C \subset \C \PP^3\,$ is smooth of degree $d$ and it has
genus $g = 0$. As in \cite[\S 5]{Hen},
we can derive a polynomial parametrization of the algebraic curve $\bar C$ 
by means of the following  change of coordinates:
\begin{equation}
\label{eq:coordchange}
\cos(\theta) = \frac{x_{0}^2-x_{1}^2}{x_{0}^2+x_{1}^2}
\quad \hbox{and} \quad \sin(\theta)= \frac{2x_{0}x_{1}}{x_{0}^2+x_{1}^2}. 
\end{equation}
Substituting into the right hand side of the equation
$$ 
\begin{pmatrix}
\phantom{-}{\rm cos}(j \theta)  & {\rm sin}(j \theta) \, \\
- {\rm sin}(j \theta)  & {\rm cos}(j \theta) \,
\end{pmatrix} \,\, = \,\,
\begin{pmatrix}
\phantom{-}{\rm cos}( \theta)  & {\rm sin}( \theta) \, \\
- {\rm sin}( \theta)  & {\rm cos}( \theta) \,
\end{pmatrix}^j,
$$
 this change of variables expresses ${\rm cos}(j \theta)$ and ${\rm sin}(j \theta)$
as homogeneous rational functions of degree $0$ in $(x_0:x_1)$.
Their common denominator equals  $\,g(x_0,x_1) = (x_0^2 + x_1^2)^d$.
This gives
$$\bar C\,=\,
\bigl\{ (F_{0}(x):F_{1}(x):F_{2}(x):F_{3}(x))
\,= \,(g:gf_{1}:gf_{2}:gf_{3})\in \C\PP^3 \,\,:\,\,(x_{0}:x_{1})\in \C\PP^1\,\bigr\}.$$ 

In this section, we are interested in computing the  
compact convex body ${\rm conv}(C)$.
The curve $\bar C$ is rational, 
and it is smooth for general choices of $\alpha_j, \beta_j, \gamma \in \R$.
Theorem \ref{thm:degreesmooth} implies:

\begin{corollary} \label{cor:trigcurve}
The algebraic boundary of the convex hull  of a general
rational curve of degree $d $ consists of
$8 \binom{d-3}{3} $ tritangent planes and the
edge surface of degree  $2(d-3)(d-1)$.
\end{corollary}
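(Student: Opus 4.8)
The plan is to derive Corollary \ref{cor:trigcurve} directly from Theorem \ref{thm:degreesmooth} by specializing to genus $g=0$. The first step is to check that a general rational space curve of degree $d$ satisfies the hypotheses of Theorem \ref{thm:degreesmooth}, namely that it is a general smooth compact curve with $g=0$. For the trigonometric curves studied in Section \ref{trig} this was noted above, and more generally a general rational curve of degree $d$ in $\C\PP^3$ is smooth of genus $0$. In particular, generality ensures that the set of tritangent planes is finite, so that the De Jonqui\`eres count underlying Theorem \ref{thm:degreesmooth} applies verbatim.

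The edge surface is then immediate: substituting $g=0$ into its degree $2(d-3)(d+g-1)$ yields $2(d-3)(d-1)$, and its irreducibility is inherited directly from Theorem \ref{thm:degreesmooth}. For the tritangent planes, I would substitute $g=0$ into the formula
$$8\binom{d+g-1}{3}-8(d+g-4)(d+2g-2)+8g-8,$$
obtaining $\,8\binom{d-1}{3}-8(d-4)(d-2)-8$. The remaining task is to recognize this as $8\binom{d-3}{3}$. Cancelling the factor $8$ and expanding the binomial coefficients, the assertion reduces to the elementary identity
$$\binom{d-1}{3}-\binom{d-3}{3}\;=\;(d-4)(d-2)+1,$$
both sides of which equal $(d-3)^2=d^2-6d+9$.

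Since the enumerative content is entirely contained in Theorem \ref{thm:degreesmooth}, there is no serious obstacle in this corollary. The only point meriting a word of care is the generality hypothesis: one must know that a general rational curve of degree $d$ is smooth and has only finitely many tritangent planes, so that both degree formulas of Theorem \ref{thm:degreesmooth} hold as stated. Granting this, the corollary follows from the substitution $g=0$ together with the displayed binomial identity.
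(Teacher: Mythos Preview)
Your proposal is correct and matches the paper's approach exactly: the paper simply asserts that Theorem~\ref{thm:degreesmooth} implies the corollary (the sentence immediately preceding the statement), leaving the substitution $g=0$ and the binomial simplification implicit. Your explicit verification of the identity $\binom{d-1}{3}-\binom{d-3}{3}=(d-3)^2=(d-4)(d-2)+1$ fills in precisely the arithmetic step the paper omits.
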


In what follows we shall explain a symbolic elimination method for computing the
defining polynomial of the edge surface of a rational curve $\bar C$. Our
examples were computed with {\tt Macaulay2} \cite{M2}.
The problem of finding
the tritangent planes is addressed in the next section.

Our approach will involve the Grassmannian ${\rm Gr}(2,4)$, which
parametrizes all lines in $\C \PP^3$. 
We identify ${\rm Gr}(2,4)$ with the hypersurface
in $\C \PP^5$ that is cut out by the Pl\"ucker quadric
$$ u_{01} u_{23} - u_{02} u_{13} + u_{03} u_{12} \, = \, 0 . $$
Consider any pair of points $p$ and $q$ on the curve $\bar C$.
They are represented by points $x_p = (x_{p0}:x_{p1})$
and $x_q = (x_{q0}:x_{q1})$ in the parameter space $\C \PP^1$. The secant line of $C$ through
$p$ and $q$
is the line in $\C \PP^3$ whose six Pl\"ucker coordinates $\,u_{ij} \,$
are the $2 {\times}2 $-minors of the matrix
$$\begin{pmatrix}
\,F_{0}(x_{p})&F_{1}(x_{p})&F_{2}(x_{p})&F_{3}(x_{p}) \, \\
\,F_{0}(x_{q})&F_{1}(x_{q})&F_{2}(x_{q})&F_{3}(x_{q}) \, \\
\end{pmatrix}.
$$

The six minors are polynomials of degree $2d$ in the coordinates $x_{pj}$ and $x_{qj}$,
and they share the common factor $x_{p0}x_{q1}-x_{p1}x_{q0}$. 
Dividing each minor by this factor yields polynomials $u_{ij}$ 
of degree $2d-2$.
They are bihomogeneous of degree $(d-1,d-1)$
in the $x_{pj}$ and $x_{qj}$, and they are
invariant under permuting the points $p$ and $q$. We can write
each polynomial $u_{ij}$ uniquely
as a polynomial of degree $d-1$ in the three
fundamental bihomogeneous invariants 
\begin{equation}
\label{eq:invariants}
a\,=\,x_{p0}x_{q0}\,, \,\,
b\,=\, x_{p1}x_{q1}\,, \,\,\,
c\,=\,x_{p0}x_{q1}+x_{p1}x_{q0}. 
\end{equation}
This identifies the plane
$\C \PP^2$ with coordinates $(a:b:c)$ 
with the symmetric square of the parameter line
$\C \PP^1$ of our curve $\bar C$.
We have constructed a polynomial map of  degree $d-1$:
\begin{equation}
\label{eq:secantmap}
\C \PP^2 \,\rightarrow \, {\rm Gr}(2,4) \subset \C \PP^5 \,\,, \,\, \,
(a:b:c) \,\mapsto\, ( u_{01}:u_{02}: u_{03}:u_{12}:u_{13}:u_{23}). 
\end{equation}
Geometrically, this represents
the secant map
from the symmetric square of the space curve $\bar C$ to the Grassmannian.
The image of this map is a surface of degree $(d-1)^2$ in ${\rm Gr}(2,4)$.

The stationary bisecants to $\bar C$ are the secant lines between points 
$p,q$ such that the tangent lines to $\bar C$ at these points intersect.
The tangent line at $p$ is defined by the partial derivatives
$$\begin{pmatrix}
\frac{\partial}{\partial x_{p0}}F_{0}(x_{p})&\frac{\partial}{\partial x_{p0}}F_{1}(x_{p})&
\frac{\partial}{\partial x_{p0}}F_{2}(x_{p})&\frac{\partial}{\partial x_{p0}}F_{3}(x_{p})\\
\rule{0pt}{15pt}
\frac{\partial}{\partial x_{p1}}F_{0}(x_{p})&\frac{\partial}{\partial x_{p1}}F_{1}(x_{p})&
\frac{\partial}{\partial x_{p1}}F_{2}(x_{p})&\frac{\partial}{\partial x_{p1}}F_{3}(x_{p})\\
\end{pmatrix}.
$$
The secant line between the points $p$ and $q$ is 
{\em stationary} if the determinant of the matrix
\begin{equation}
\label{eq:4x4det}
\begin{pmatrix}
\frac{\partial}{\partial x_{p0}}F_{0}(x_{p})&\frac{\partial}{\partial x_{p0}}F_{1}(x_{p})&
\frac{\partial}{\partial x_{p0}}F_{2}(x_{p})&\frac{\partial}{\partial x_{p0}}F_{3}(x_{p})\\
\rule{0pt}{15pt}
\frac{\partial}{\partial x_{p1}}F_{0}(x_{p})&\frac{\partial}{\partial x_{p1}}F_{1}(x_{p})&
\frac{\partial}{\partial x_{p1}}F_{2}(x_{p})&\frac{\partial}{\partial x_{p1}}F_{3}(x_{p})\\
\rule{0pt}{15pt}
\frac{\partial}{\partial x_{q0}}F_{0}(x_{q})&\frac{\partial}{\partial x_{q0}}F_{1}(x_{q})&
\frac{\partial}{\partial x_{q0}}F_{2}(x_{q})&\frac{\partial}{\partial x_{q0}}F_{3}(x_{q})\\
\rule{0pt}{15pt}
\frac{\partial}{\partial x_{q1}}F_{0}(x_{q})&\frac{\partial}{\partial x_{q1}}F_{1}(x_{q})&
\frac{\partial}{\partial x_{q1}}F_{2}(x_{q})&\frac{\partial}{\partial x_{q1}}F_{3}(x_{q})\\
\end{pmatrix}
\end{equation}
vanishes.   The extraneous factor  $x_{p0}x_{q1}-x_{p1}x_{q0}$ appears with multiplicity $4$ in the determinant. Removing this factor of degree $8$ from the determinant, we obtain a
symmetric polynomial of degree $4(d-3)$ in $x_p$ and $x_q$. As before,
we now write the resulting expression as a polynomial $\Phi(a,b,c)$ of degree $2(d-3)$
in the three fundamental invariants  (\ref{eq:invariants}).

The equation $\Phi(a,b,c) = 0$ defines a curve of degree $2(d-3)$
in the plane $\C \PP^2$ that parametrizes the symmetric square of $\bar C$. 
This is the curve of all stationary bisecant lines to $\bar C$.
The image of this plane curve under the degree $d-1$ map (\ref{eq:secantmap})
is a curve of degree $2(d-3)(d-1)$ in the Grassmannian ${\rm Gr}(2,4)$.

We can compute the ideal $I_\Phi \subset \R[u_{01},u_{02},u_{03},u_{12},u_{13},u_{23}]$
of this image curve using Gr\"obner-based elimination. Finally, the last step is
to pass from the curve in ${\rm Gr}(2,4)$ to the 
corresponding ruled surface of the same degree in $\C \PP^3$.
We can do this by
adding three more unknowns $x,y,z$ and by
augmenting the ideal $I_\Phi$ with the four equations
\begin{equation}
\label{eq:skewsymm}
\begin{pmatrix}
\phantom{-} 0 & \phantom{-}u_{23} & -u_{13} & \phantom{-} u_{12} \, \\
-u_{23} & \phantom{-} 0 & \phantom{-} u_{03} &- u_{02} \, \\
\phantom{-} u_{13} & - u_{03} & \phantom{-} 0 & \phantom{-} u_{01} \, \\
-u_{12} & \phantom{-} u_{02}  & -u_{01} & \phantom{-} 0 \,
\end{pmatrix} \cdot 
\begin{pmatrix} 1 \\ x \\ y \\ z \end{pmatrix} \,\, = \,\,
\begin{pmatrix} 0 \\ 0 \\ 0 \\ 0 \end{pmatrix}.
\end{equation}
The resulting ideal lives in $ \R[u_{01},u_{02},u_{03},u_{12},u_{13},u_{23},x,y,z]$.
From that ideal, we eliminate the first six unknowns to arrive at a principal ideal
in $\R[x,y,z]$. The polynomial which generates the principal elimination ideal is the
defining equation of the edge surface of $C$.

\begin{example}[$d=4, g=0$]
\label{ex:quarticcurve}
We consider the convex hull of  a general  trigonometric curve $C$ of degree four.
There are no tritangents, so its algebraic boundary is just the
edge surface. In contrast to the elliptic curves of Example \ref{ex:ellipspace},
where the edge surface had four components,
the edge surface of the rational quartic curve $C$ is irreducible of degree six.
For example, let
\begin{equation}
\label{eq:quarticcurve}
C \,:\, \theta \,\mapsto \, \bigl( 
{\rm cos}(\theta), {\rm sin}(\theta) + {\rm cos}(2 \theta), {\rm sin}(2 \theta) \bigr) .
\end{equation}
This curve is smooth and it is cut out by one quadric and two cubics.
Its prime ideal is
$$  \bigl\langle
3 x^2 - y^2 + 2 x z - z^2 - 2 y,\,
   2 y^3-4 x y z+2 y z^2+2 y^2+x z+z^2-4 y,
   4 x y^2+4 x z^2-2 x y-4 y z-2 x+z \bigr\rangle.
$$
The degree $3$ map $\C \PP^2
\rightarrow {\rm Gr}(2,4)$ in (\ref{eq:secantmap})  which parametrizes the
secant lines is given by
\begin{small}
$$
\begin{matrix}
u_{01} = -a^2 c-c^3+2 a bc- b^2 c, \qquad \qquad \,  &
u_{02} = a^3-4 a^2 c+ ac^2 -3 a^2 b+3 a b^2-b^3+4  b^2 c -b c^2 ,\\
u_{03} = 2 a^3{-}2 a c^2 {-}2 a^2 b{-}2 a b^2{+}2 b^3{-} 2 b c^2  , &
u_{12} =  a^3{-}3 a^2 c{+}c^2 a{+}c^3{-}a^2 b{-}a b^2{-}2 c a b{+}b^3{-}3  b^2 c {+} b c^2 ,
\\
u_{13} =  2 a^3{-}2 a c^2 {+}2 a^2 b{-}2 a b^2{-}2 b^3{+} 2 b c^2 , &
\! u_{23} =  2 a^3-2 a c^2 +14 a^2 b+14 a b^2-8  a b c+2 b^3-2 b c^2 .
\end{matrix}
$$
\end{small}
The determinant (\ref{eq:4x4det}) reveals
that the curve of stationary bisecants in $\C \PP^2$ is cut out by 
$$ 
\Phi(a,b,c) \quad = \quad 
a^2
-2ab
+4ac
+b^2
+4bc
-c^2.
$$

The image of this curve 
in $\,{\rm Gr}(2,4) \subset \C \PP^5\,$
is a curve of degree six. Its homogeneous prime
ideal  is minimally generated by eight quadrics 
in $\R[u_{01},u_{02},u_{03},u_{12},u_{13},u_{23}]$.
We next add the four equations in (\ref{eq:skewsymm}) to these 
eight quadrics, we saturate with respect to the irrelevant
ideal $\langle u_{01},u_{02},u_{03},u_{12},u_{13},u_{23} \rangle$,
and thereafter we eliminate the six $u_{ij}$'s. As result we find
\begin{small}
\begin{eqnarray*}
16 x^6-32 x^4 y^2+16 x^2 y^4-96 x^5 z-160 x^3 y^2 z+192 x^4 z^2+
    16 x^2 y^2 z^2-128 x^3 z^3+216 x^4 y +48 x^2 y^3 \\ -8 y^5+72 x^3 y z+88 x y^3 
    z-72 x^2 y z^2-8 y^3 z^2+72 x y z^3-207 x^4-138 x^2 y^2-23 y^4 {+}180 x^3 z {+}
    60 x y^2 z \\ -126 x^2 z^2-54 y^2 z^2+108 x z^3-27 z^4-36 x^2 y+4 y^3-36 x y z
    +108 x^2+36 y^2-108 x z+27 z^2.
\end{eqnarray*}
\end{small}
This  irreducible sextic, 
defining the edge surface
of the curve (\ref{eq:quarticcurve}),
is shown in Figure~\ref{fig:quartic}.
\qed
\end{example}

The next examples we shall examine are trigonometric curves $C$
of degree $d=6$. If a rational sextic curve $\bar C$ is smooth, then its edge surface
is irreducible of degree $30$, and our algorithm above
will generate the irreducible  polynomial of degree $30$ 
in $\R[x,y,z]$. However, when the coefficients of $f_1,f_2,f_3$
in (\ref{eq:trigcurve}) are special, then the 
projective curve $\bar C$ may have singularities in $\C \PP^3$,
even if $C$ is smooth in $\R^3$. In those cases, 
the degree of the edge surface of $C$ drops below $30$,
and the surface may even decompose into several components.

\begin{example}\label{ex:runningex}
In the Introduction we discussed the special curve
$\,\bigl({\rm cos}(\theta) , {\rm sin}(2 \theta), {\rm cos}(3 \theta) \bigr)$.
The corresponding polynomial parametrization
$\,\C \PP^1 \rightarrow  \C \PP^3\,$ of the projective curve $\bar C$ equals
$$  \bigl( \,(x_0^2+x_1^2)^3 : (x_0^2-x_1^2)(x_0^2+x_1^2)^2 :
4(x_0^2-x_1^2)(x_0^2+x_1^2)x_0x_1 :
(x_0^2-x_1^2) (x_1^4-14 x_1^2 x_0^2 + x_0^4) \, \bigr). $$
This curve has two singular points.
The double point $(1:0:0:0)$ lies hidden inside the convex hull in Figure~\ref{fig:frank},
and the double point $(0:0:0:1)$ lies in the plane at infinity.  Notice that the plane at infinity 
intersects the curve only in this singular point.
Our algorithm finds six polynomials $u_{ij}(a,b,c)$ of degree five
that define the secant map $\C \PP^2 \rightarrow {\rm Gr}(2,4)$, but now the
curve of stationary bisecants turns out to be reducible:
$$ \Phi \,\,\, =  \,\,\, (a-b) \,c\, (3a^4-6a^2b^2+2a^2c^2+3b^4+2b^2c^2-c^4). $$
Its image in ${\rm Gr}(2,4)$ is a reducible curve whose three components have
degrees $2$, $3$ and $16$. These translate into three irreducible
components of the edge surface of $\bar C$. From the factor $ c $
we obtain the cubic surface  (\ref{eq:acubicsurface}) which is yellow in
Figure \ref{fig:frank}, and from the quartic factor of $\Phi$ we obtain the
surface of degree $16$ which is green in Figure~\ref{fig:frank}.
Finally,  the factor $a-b $ contributes
the quadratic surface $\{x^2-y^2-xz = 0\}$,  
which is not needed for $\partial {\rm conv}(C)$. \qed
\end{example}

\begin{example} \label{ex:Henrion}
Henrion \cite[\S 5]{Hen} discusses the trefoil knot with parametric representation
$$ C \,:\, 
\theta \,\mapsto \, \bigl(\, {\rm cos}(\theta) + 
2 \,{\rm cos}(2 \theta), \,{\rm sin}(\theta ) + 2 \,{\rm sin}(2 \theta),\, 2 \, {\rm sin}(3 \theta)\, \bigr) . $$
This rational sextic $\,\bar C \subset \C \PP^3\,$ 
has one singular point at $(0:0:0:1)$.
By running our algorithm, we find that 
the edge surface of the curve $C$ is irreducible of degree $27$. \qed
\end{example}

\begin{example}
The following remarkable sextic curve $\bar C$ is due to
Barth and Moore \cite{BaMo}:
$$ \C \PP^1 \rightarrow \C \PP^3 \, , \,\,\,
(x_0:x_1) \mapsto 
\bigl(\, x_0^6-2 x_0 x_1^5 \,:\, 2 x_0^5 x_1+x_1^6 \,: \,
x_0^4 x_1^2 \, : \, x_0^2 x_1^4 \,\bigr). $$
This curve is smooth in $\C \PP^3$, so its edge surface
should have the expected degree $30$. However, when
we run our algorithm, its output is only one irreducible polynomial
of degree $10$:
$$ 27 x^5y^5 + 3125y^{10}-1875 x^2 y^7z  +  \cdots
+27z^5-16y^3z. $$
This surprising output is  not  a contradiction. The edge
surface of the curve $\bar C$ naturally carries a 
non-reduced structure of multiplicity three. 
The geometry of $\bar C$ is such that
every stationary bisecant line is a trisecant line with all three 
tangents lying in the same plane.  The curve $\bar C$ therefore has a 
one-dimensional family of planes that are tangent at three points.  
This family defines an irreducible curve $\Gamma$ of degree $6$ in the
dual projective space $(\C \PP^3)^*$.

The curve $\bar C$ has no tritangent planes at all. Indeed,
our definition of tritangent planes required the presence of non-collinear points
of tangency. De Jonqui\`eres formula, which predicts
$8$ tritangents, does not apply here. If we run the algorithm of Section 4 
for computing all points $(\alpha:\beta:\gamma:\delta) \in (\C \PP^3)^*$
dual to tritangent planes then the output is the curve~$\Gamma$.

We can construct a  real compact model $C \subset \R^3$ of
the Barth-Moore curve $\bar C$
by replacing the first coordinate in $\C \PP^3$
with the sum of all four coordinates.
That sum is a positive binary sextic, and the corresponding new hyperplane
at infinity contains no real points. 
This curve is not trigonometric
because $ (x_0^2+x_1^2)^3$ is not in the linear span of
the four coordinates.
\qed
\end{example}

\section{Computing Tritangent Planes}

According to Corollary \ref{cor:trigcurve}, a general
trigonometric curve $C$ of degree $d $ 
has $8 \binom{d-3}{3} $ tritangent planes, and these account for the
two-dimensional polygonal faces of ${\rm conv}(C)$. In this section, we explain how
these planes can be computed symbolically.
The focus of our exposition remains on
sextic curves, where the expected number  of tritangent planes is eight.

We work with the polynomial parameterization 
$(F_0(x): F_1(x): F_2(x): F_3(x))$ of the projective curve $\bar C$.
Each $F_i(x)$ is a binary form of degree $d$ in $x=(x_0,x_1)$.
We represent a plane in $\C \PP^3$ by a linear equation 
$\,\alpha  + \beta x + \gamma y + \delta z = 0 $. The corresponding point
$(\alpha:\beta:\gamma:\delta)$   in the dual projective space
 $(\C \PP^3)^*$
represents the parameters of our problem.
The plane is tangent to $\bar C$ at a point $p$ if the point
$\,(x_{p0}: x_{p1}) \in \C \PP^1\,$ is a double root of 
the binary form
\begin{equation}
\label{eq:binaryform}
\alpha F_0 (x) \,+ \,\beta F_1 (x) \,+\,\gamma F_2(x) \,+\delta F_3(x) .
\end{equation}
We seek to compute the set $\mathcal{T}_C$ of
all points  $\,(\alpha:\beta:\gamma:\delta) \in 
(\C \PP^3)^*\,$ such that (\ref{eq:binaryform}) has three double roots.
The set  $\mathcal{T}_C$ is finite, of cardinality $8 \binom{d-3}{d}$, and we can compute its
ideal using  Gr\"obner-based elimination. An alternative representation of 
$\,\mathcal{T}_C\,$ is its {\em Chow form}
\begin{equation}
\label{eq:chowform}
 \prod_{(\alpha:\beta:\gamma:\delta) \in \mathcal{T}_C} \!\!\!
(\alpha + \beta x + \gamma y + \delta z) .
\end{equation}
If the $F_i(x)$ have coefficients in $\Q$ then
so does the Chow form (\ref{eq:chowform}), which will typically
be irreducible over $\Q$. The corresponding
surface in $\C \PP^3$ is the union of all tritangent planes,
and these include the planes in $\R^3$ that are spanned by
all $2$-dimensional facets of ${\rm conv}(C)$.

For small values of $d$, such as $d=6$, we use the
following preprocessing step
whose output facilitates computing $\mathcal{T}_C$ for all
rational curves $C$ of degree $d$. Consider the binary form
$$
G(x) \,\,\, = \,\,\, \sum_{i=0}^d \kappa_i \,x_0^i x_1^{d-i} 
$$
whose coefficients $\kappa_i$ are unknowns. 
We precompute the prime ideal $P_d$ of height $3$ in
$\Q[\kappa_0,\ldots,\kappa_d]$ whose variety consists of all
binary forms $G(x)$ that have three double roots.
Suppose we know a list of generators for $P_d$.
For any particular curve $C$, we can then
equate $G(x)$ with (\ref{eq:binaryform}), and this
gives an expression for $\kappa_i$ as  a linear combination
of $\alpha, \beta, \gamma$ and $\delta$.  Substituting these
expressions for each $\kappa_i$, $i=0,1,\ldots,d$, we obtain
an ideal  $P_{d,C}$ of height $3$ in $\Q[\alpha,\beta,\gamma,\delta]$.
The complex projective variety of $P_{d,C}$ is the desired set $\mathcal{T}_C$,
and we can compute  the Chow form (\ref{eq:chowform}) from the
generators of $P_{d,C}$ by a standard elimination process.

\begin{remark}[$d=6$]
The prime ideal $P_6$ is minimally generated by $45$ quartics such as
$$
16 \kappa_0^2 \kappa_6^2+8 \kappa_0 \kappa_1 \kappa_5 \kappa_6-4 \kappa_0 \kappa_3^2 \kappa_6+\kappa_1^2 \kappa_5^2 \,,\,\,
8 \kappa_0^2 \kappa_5 \kappa_6+2 \kappa_0 \kappa_1 \kappa_5^2-4 \kappa_0 \kappa_2 \kappa_3 \kappa_6+\kappa_1^2 \kappa_3 \kappa_6\, ,\,\, \ldots 
$$
The variety of the ideal 
$P_6$ consists of all binary sextics that are squares of binary cubics,
$$ \kappa_0 x_1^6 + \kappa_1 x_0^1 x_1^5 + 
\kappa_2 x_0^2 x_1^4 + \cdots + \kappa_6 x_0^6 \,\, = \,\,
(\nu_0 x_1^3 + \nu_1 x_0 x_1^2 + \nu_2 x_0^2 x_1 + \nu_3 x_0^3)^2 , $$
and it is quick and easy to generate all $45$ generators of $P_6$
in a system like {\tt Macaulay2}. \qed 
\end{remark}

We now discuss the ideal $P_{d,C}$ and the Chow form (\ref{eq:chowform})
for two examples from Section 3.

\begin{example}
The systematic computation of the
tritangent planes to the curve $C$ in Figure~\ref{fig:frank} is done as follows.
First, the trigonometric parametrization $({\rm cos}(\theta), {\rm sin}(2 \theta),
{\rm cos}(3 \theta))$ is made polynomial via (\ref{eq:coordchange}),
and the resulting binary form (\ref{eq:binaryform}) is then found to be
$$
(\alpha+\beta+\delta) x_0^6
+4 \gamma x_0^5 x_1
+(3 \alpha+\beta-15 \delta) x_0^4 x_1^2
+(3 \alpha-\beta+15 \delta) x_0^2 x_1^4
-4 \gamma x_0 x_1^5
+(\alpha-\beta-\delta) x_1^6.
$$
Substituting the coefficients for $\kappa_0,\ldots,\kappa_6$
into the $45$ generators of $P_6$, we arrive at the ideal
\begin{eqnarray*}
P_{6,\gamma} \,\, = &
    \langle \alpha-\delta,\beta,\gamma\rangle \,\cap \,
   \langle \alpha+\delta,\beta,\gamma\rangle \, \cap \, \
    \langle \alpha+\beta-7\delta, \gamma^2+4\beta\delta, \gamma \delta, \delta^2 \rangle
\\ &      \,\cap\,\,
    \langle \alpha-\beta+7\delta,\gamma^2+4\beta\delta, \gamma \delta, \delta^2\rangle
   \, \,\cap\,\,
    \langle \alpha,\beta,\gamma,\delta\rangle^4.
\end{eqnarray*}
The ideal $P_{6,\gamma}$ has two simple roots and
two triple roots in $\C  \PP^3$, and its Chow form equals
$$ {\rm Chow}(\mathcal{T}_C) \,\, = \,\, (z-1)(z+1)(x-1)^3(x+1)^3. $$
This is the degree eight equation which defines the tritangent planes to the curve $C$. \qed
\end{example}

\smallskip

\begin{example}
Let $C$ be Henrion's curve
$\,( {\rm cos}(\theta) + 
2 \,{\rm cos}(2 \theta), \,{\rm sin}(\theta ) + 2 \,{\rm sin}(2 \theta),\, 2 \, {\rm sin}(3 \theta))\,$
in Example \ref{ex:Henrion}.
Here the Chow form of tritangent planes to $C$ is found to factor as follows:
\begin{small}
\begin{eqnarray*}
(z-2) (z+2) (12 x^3-11 x^2 z-6 x^2 y+8 x z y-36 x y^2+z^3+4 z^2 y-7 z y^2-6 y^3-100 x^2+16 x z
\\ -80 x y+32 z^2+36 z y-92 y^2-16 x+292 z-40 y+752) (12 x^3+11 
x^2 z+6 x^2 y+8 x z y-36 x y^2  -z^3 \\-4 z^2 y+7 z y^2+6 y^3-100 x^2-16 x z+80 x y
+32 z^2+36 z y-92 y^2-16 x-292 z+40 y+752). \qed
\end{eqnarray*}
\end{small}
\end{example}

\smallskip

We end with a prominent example of a sextic trigonometric curve.
Freedman asked in \cite{Fre} whether every generic smooth knotted curve $C$
in $\R^3$ must have a tritangent plane. This question was answered negatively
by Ballesteros and Fuster \cite{BF} and Morton \cite{Mor}.
Their counterexamples are trigonometric curves. What follows is
Morton's trefoil example.

\begin{center}
\begin{figure}
\includegraphics[width=9.2cm]{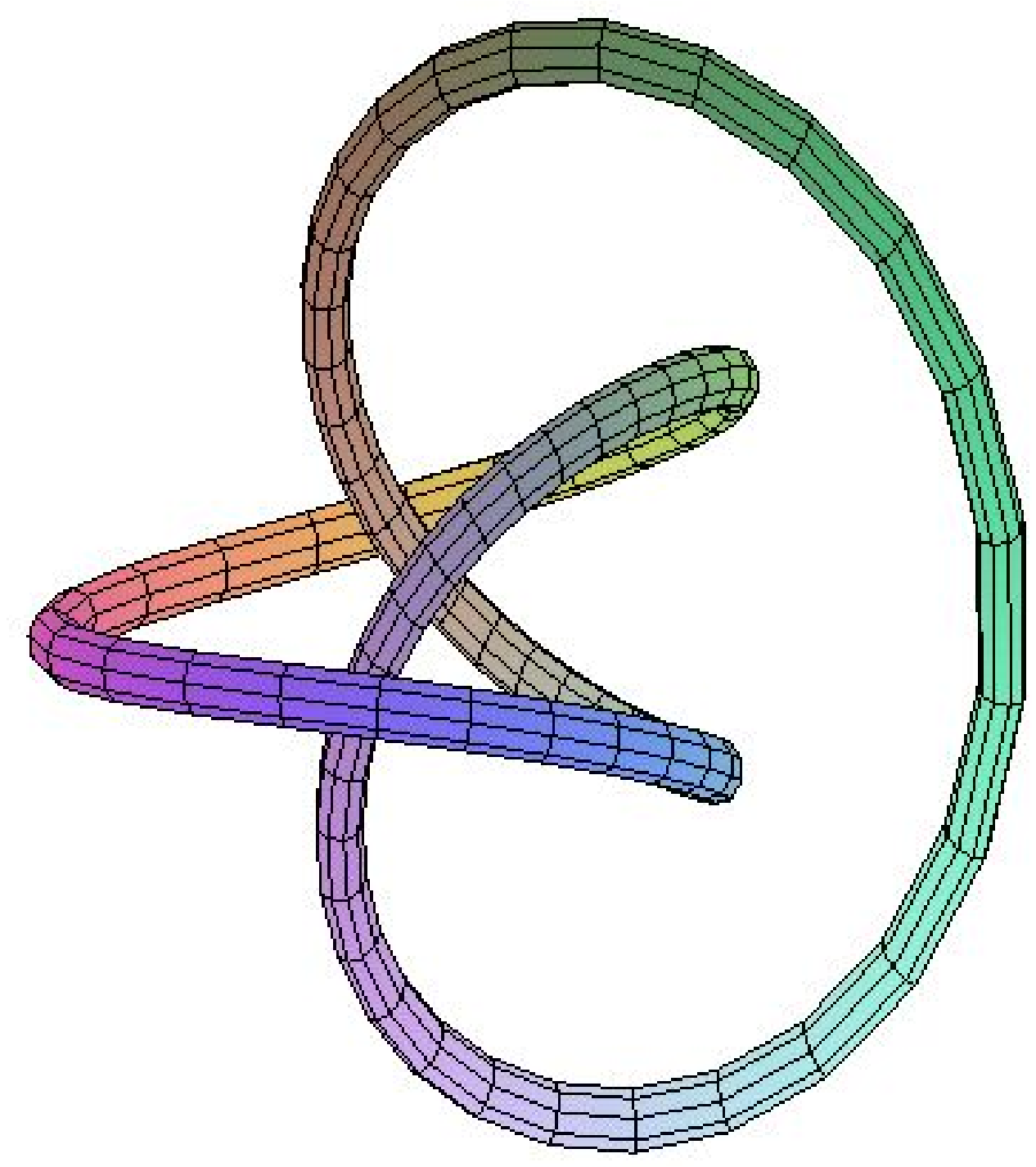} 
\hskip -2.3cm
\includegraphics[width=9.2cm]{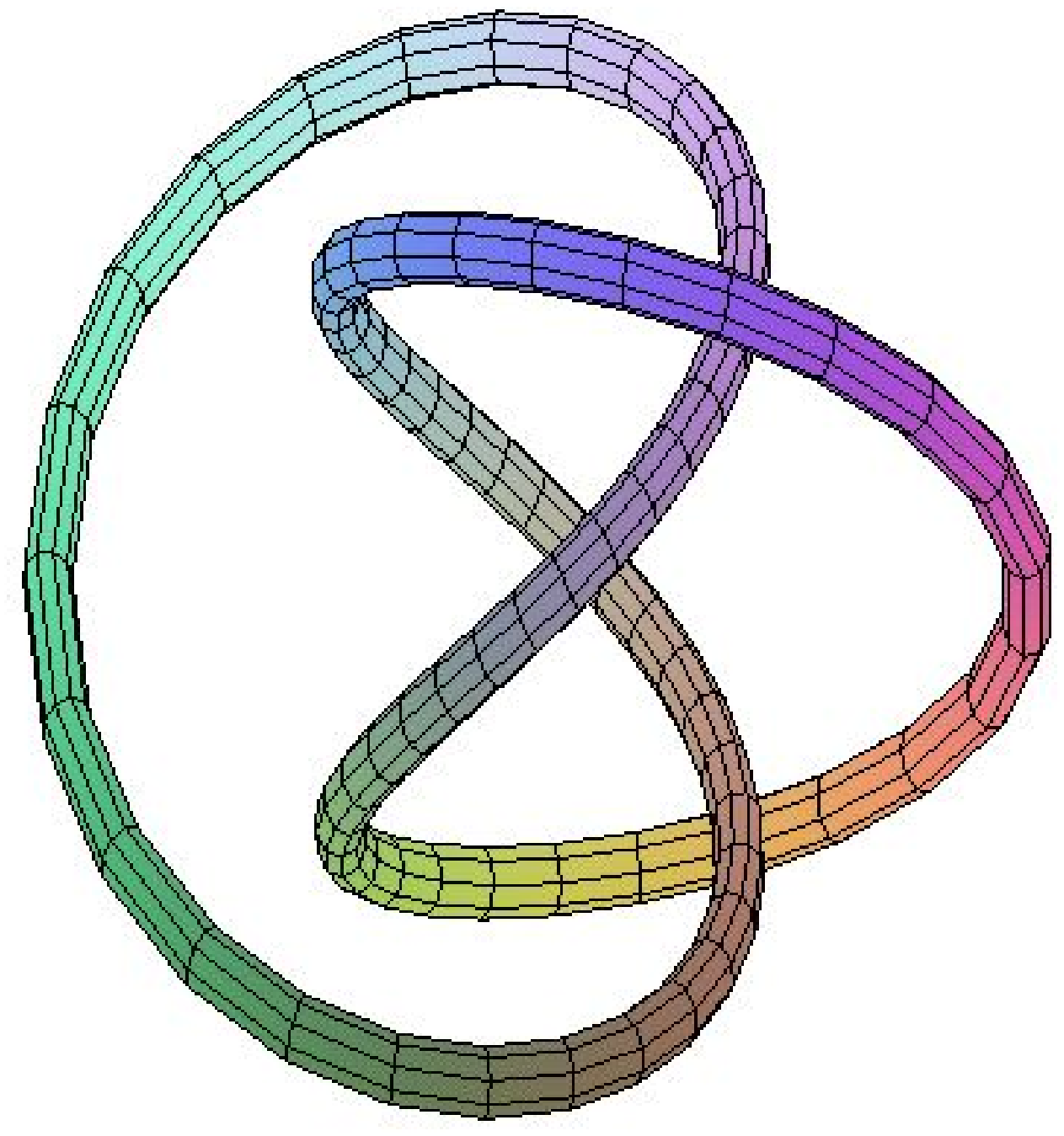}
\vskip -1.4cm
\caption{Morton's trigonometric curve 
without tritangent planes (Example \ref{ex:Morton}).}
\label{fig:morton}
\end{figure}
\end{center}

\begin{example} \label{ex:Morton}
Morton \cite{Mor} constructed the following rational trigonometric curve:
$$ C \,\, : \,\,\theta \,\,\mapsto \, \,
\frac{1}{2- {\rm sin}(2 \theta)}
\bigl( {\rm cos}(3 \theta), {\rm sin}(3 \theta), {\rm cos}(2 \theta) \, \bigr) $$
This  is a trefoil knot in $\R^3$.
The corresponding polynomial parametrization 
$\C \PP^1 \rightarrow \C \PP^3$  is
$$
(x_0:x_1) \,\,\mapsto \,\,
\begin{pmatrix}
2 (x_0^4+2 x_0^2 x_1^2+x_1^4-2 x_0^3 x_1+2 x_0 x_1^3) (x_0^2+x_1^2) \\
(x_0-x_1) (x_0+x_1) (x_1^2+4 x_0 x_1+x_0^2) (x_1^2-4 x_0 x_1+x_0^2) \\
2 x_0 x_1 (x_0^2-3 x_1^2) (3 x_0^2-x_1^2) \\
(2 x_0 x_1+x_0^2-x_1^2) (x_0^2-x_1^2-2 x_0 x_1) (x_0^2+x_1^2)
\end{pmatrix}. $$
We compute the ideal $P_{6,C}$ and its Chow form as described above.
The result is
$$ \begin{matrix}
{\rm Chow}(\mathcal{T}_C) \,\, = & \!\!\!
(x^2+y^2)^2 \cdot \bigl( 13225 x^4+58880 x^3 y+91986 x^2 y^2-638976 x^2 z^2
+13225 y^4 \\ &
+58880 x y^3- 1148160 x y z^2
-638976 y^2 z^2+6230016 z^4+449280 x^2 z \\ &
-449280 y^2 z -409600 x^2-736000 x y-409600 y^2-7987200 z^2+2560000 \bigr).
\end{matrix} $$
The large quartic factor is irreducible over $\mathbb{Q}$ but
it decomposes over $\mathbb{R}$ into linear factors:
$$ \begin{matrix}
(1+0.339305 \,x+0.211829 \,y+1.248999 \,z) \cdot
(1-0.339305 \,x-0.211829 \,y+1.248999 \,z) \cdot \\ 
(1+0.211829 \,x+0.339305 \,y-1.248999 \,z) \cdot
(1-0.211829 \,x-0.339305 \,y-1.248999 \,z) .
\end{matrix} $$
Each of these four tritangent planes touches the curve
$\bar C$ in one real point and in two imaginary points.
This answers Freedman's question:
the real curve $C \subset \R^3$ has no tritangent planes.
Hence the algebraic boundary $\partial_a {\rm conv}(C)$
consists only of the edge surface of $C$. We find that this surface
decomposes (over $\mathbb{Q}$) into
two components of degrees $10$ and $20$.
\qed
\end{example}

\section{Degree formulas for Smooth and Singular Curves}

This section concerns the algebraic geometry of the edge surface.
We present a self-contained derivation
of its degree and the degrees of its curves of singularities.  
Our approach to these  calculations uses
a method that goes back to Cayley and Pl\"ucker. 
In particular, Cayley computed in  \cite{Cay} the
degree of the curve dual to 
the edge surface, i.e. the number of planes through a general point that are tangent to the curve at two points.
Our approach is based on two classical formulas:  
 De Jonqui\`eres' formula (\ref{eq:dejonquieres}) 
and Hurwitz' formula $R=2\delta+2g-2$ for the degree 
$R$ of the ramification 
of a surjective map of degree $\delta$ from a
smooth curve of genus $g$ onto $\C\PP^1$ (see \cite[IV.2.4]{Ha}).  
The degree formulas we find for smooth curves are not new.
In recent years they appeared in the study by  
von zur Gathen \cite{JvzG} of secant spaces to space curves, 
in the classification of projection centers of plane projections by 
T.~Johnsen \cite {J},
and in the study of the focal surface of a congruence of lines by 
 Arrondo {\it et al.} \cite{ABT}. The
correction term  for singular curves 
in Theorem \ref{thm:formula} appears to be new.

The computation interprets the edge surface as the scroll of stationary bisecants to the curve $\bar C$.  
This scroll is a curve of secants, while the variety of all secants  to $\bar C$ form a surface in the 
Grassmannian $\G (2,4)$.  Let $S_{2}=S_{2}\bar C$ be the symmetric square of the curve $\bar C$. If $\bar C$ is singular, we first normalize, 
and take the symmetric square of the normalized curve, thus $S_{2}$ is a smooth surface.
As in Section \ref{trig}, our key object is the secant map 
$S_{2}\to \G (2,4),$
which maps a pair of points on $\bar C$ to the secant  spanned by them.  The image of this map is the secant surface.
We thus  consider the edge surface as a curve of stationary bisecants on $S_{2}$ mapped by the secant map into the Grassmannian.  
We shall determine the class of this curve and of the hyperplane 
section of the secant surface in the N\'eron-Severi group of $S_{2}$.

For a generic smooth curve of genus $g>0$, the N\'eron-Severi group of divisors on $S_{2}$ has rank two.
It is generated by the class of $C_{p}=\{(p,q)|q\in C\}\subset S_{2}$
and half the class of the diagonal, which we denote by $\Delta$. If $\bar C$ has geometric genus $g$, then $\Delta^2=1-g$, while $C_{p}^2=C_{p}\cdot\Delta=1$.   
For rational curves $(g=0)$, we have $S_{2}=\C\PP^2$ and the
N\'eron-Severi group has rank $1$, generated by the class of a line which 
coincides with $C_{p}$ and $\Delta$. The class $H$ on $S_{2}$ of a 
hyperplane section of the secant surface is computed using the fact that the 
lines meeting a fixed line in $\C\PP^3$ form a hyperplane 
section of $\G (2,4)\subset\C\PP^5$.
 The number of secants  through a point on $\bar C$ that intersect a fixed line is $d-1$, so 
$H\cdot C_{p}=d-1$.  The projection of $\bar C$ from a fixed line has ramification at the points where the tangent meets this line.
By Hurwitz' formula, we have  $\,H\cdot 2\Delta=2d+2g-2$, and hence
 $\,H\equiv dC_{p}-\Delta$.

The class $B$ on $S_{2}$ of the curve of stationary bisecants to $\bar C$ is computed similarly: Set $B\equiv aC_{p}+b\Delta$.
By Hurwitz' formula, there are $2(d-2)+2g-2$ stationary bisecant lines through every point, so $B\cdot C_{p}=a+b=2(d+g-3)$. 
Next, a tangent line at a point $p$ is a stationary bisecant line only if $\bar C$ has a plane of third order contact at $p$, i.e. a stall.
 By Remark \ref{stall} their number is $4(d+3g-3)$, so the intersection number of 
 $B$ with the diagonal is $B\cdot 2\Delta=2a+2b(1-g)=4(d+3g-3)$. Hence $B\equiv 2(d+g-1)C_{p}-4\Delta$ when $g>0$. 
 When $g=0$, we have $C_{p}=\Delta$, and $B\equiv 2(d-3)C_{p}$. 

\begin{proposition} \label{degree}  
The degree of the edge surface of a general
smooth space curve of degree $d$ and genus $g$ is  $2(d-3)(d+g-1)$.
\end{proposition}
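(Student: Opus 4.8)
The plan is to compute the degree of the edge surface as a single intersection number $H \cdot B$ on the smooth surface $S_2$, using the two divisor classes already determined above. The degree of a hypersurface in $\C\PP^3$ is the number of points in which it meets a general line $\ell$; since the edge surface is ruled by the stationary bisecant lines, each such point lies on one of these rulings, so the degree equals the number of stationary bisecants that meet $\ell$. First I would check that for a general smooth curve the line $\ell$ may be chosen so that these intersections are transverse and so that distinct intersection points lie on distinct rulings, ensuring the count carries no spurious multiplicities.

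Next I would transport this count to $S_2$. The lines of $\C\PP^3$ that meet the fixed line $\ell$ form a hyperplane section of $\G(2,4) \subset \C\PP^5$, and the pullback of this hyperplane section under the secant map $S_2 \to \G(2,4)$ is by definition the class $H$. The stationary bisecants cut out the curve class $B$ on $S_2$. Provided the secant map is birational onto the secant surface---so that the assignment of a ruling line to a point of $B$ is generically one-to-one---the number of stationary bisecants meeting $\ell$, and hence the degree of the edge surface, equals the intersection number $H \cdot B$ on $S_2$.

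Finally I would evaluate $H \cdot B$. For $g > 0$ I substitute $H \equiv dC_p - \Delta$ and $B \equiv 2(d+g-1)C_p - 4\Delta$ and expand using $C_p^2 = C_p \cdot \Delta = 1$ and $\Delta^2 = 1-g$; collecting terms gives $2d^2 + 2dg - 8d - 6g + 6 = 2(d-3)(d+g-1)$. For $g = 0$, where $S_2 = \C\PP^2$ and the line class satisfies $C_p = \Delta$, the classes collapse to $H \equiv (d-1)C_p$ and $B \equiv 2(d-3)C_p$, so $H \cdot B = 2(d-1)(d-3)$, which again equals $2(d-3)(d+g-1)$.

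The substantive geometric input---and the step I expect to be the main obstacle---is the identification of the degree of the edge surface with $H \cdot B$ itself, namely that the secant map is generically injective on the curve $B$ of stationary bisecants and that a general line $\ell$ meets the ruled edge surface without excess multiplicity. These are genericity statements about a general smooth $C$; granting them, the intersection-number bookkeeping in the last paragraph is purely mechanical, and the two cases $g > 0$ and $g = 0$ assemble into the uniform formula.
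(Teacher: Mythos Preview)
Your proposal is correct and follows essentially the same approach as the paper: identify the degree of the edge surface with the number of stationary bisecants meeting a general line, transport this to the intersection number $H\cdot B$ on $S_2$, and evaluate using the divisor classes already computed. The paper's proof is a one-line computation $H\cdot B=2(d-1)(d+g-1)-4d+4(1-g)=2(d-3)(d+g-1)$; your version adds explicit discussion of the genericity needed for the identification and treats the $g=0$ case separately, but the substance is identical.
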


\begin{proof} 
 The degree of the edge surface is computed by the number of stationary bisecants that intersect a general line, 
so it is $H\cdot B=2(d-1)(d+g-1)-4d+4(1-g)=2(d-3)(d+g-1)$.
 \end{proof}

If $\bar C$ has singularities, then the 
secant map is not defined at the singular points.  
More precisely, it is rational on the symmetric square  $S_{2}$ of the
normalization, but it is not defined on the pairs of points that lie over the singular points on $\bar C$. 
The secant map may, however, be extended by a blowup of $S_{2}$ in the points corresponding to the singularities. 
If the singularities are ordinary nodes or cusps, 
in the sense that no plane has local intersection multiplicity more than $4$,
then the degree of the edge surface may be computed as above.

 \begin{theorem} \label{thm:formula}
The edge surface of a general irreducible space curve of degree $d$, 
geometric genus $g$, with $n$ ordinary nodes and $k$ ordinary cusps,
has degree $2(d{-}3)(d{+}g{-}1)-2n-2k$.
The cone of bisecants through each 
cusp has degree $d{-}2$ and is a component of the surface.
 \end{theorem}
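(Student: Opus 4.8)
The plan is to repeat the N\'eron-Severi computation behind Proposition~\ref{degree} on a blow-up of the symmetric square, while tracking the indeterminacy of the secant map over the singular points. Let $\hat C$ denote the normalization of $\bar C$, a smooth curve of genus $g$, and let $S_2 = S_2\hat C$ be its symmetric square. As noted above, the secant map $S_2 \rightarrow \G(2,4)$ is a morphism away from the finitely many points lying over the $n$ nodes and $k$ cusps of $\bar C$. I would first blow these up, $\pi\colon \tilde S \to S_2$, with exceptional curves $E_1,\dots,E_{n+k}$, so that the secant map extends to a morphism $\tilde\phi$. Writing $H\equiv dC_p-\Delta$ and $B\equiv 2(d+g-1)C_p-4\Delta$ for the classes on $S_2$ coming from the smooth case, the resolved hyperplane class is $\tilde H=\pi^*H-\sum_i m_iE_i$, where $m_i$ is the order of vanishing at the $i$-th base point of the Pl\"ucker minors $u_{ij}$ of (\ref{eq:secantmap}), while the strict transform of the stationary bisecant curve is $\tilde B=\pi^*B-\sum_i\mu_iE_i$ with $\mu_i={\rm mult}_{x_i}B$. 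Since the edge surface is swept out by the stationary bisecants, its degree equals $\tilde H\cdot\tilde B=H\cdot B-\sum_i m_i\mu_i=2(d-3)(d+g-1)-\sum_i m_i\mu_i$, the last equality being Proposition~\ref{degree}; so everything reduces to evaluating the local products $m_i\mu_i$.

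Next I would carry out the local analysis at each singularity, where nodes and cusps behave quite differently. At an \emph{ordinary node} $\nu(r)=\nu(s)$ the point $x=\{r,s\}$ is a simple base point: expanding the minors $u_{ij}$ to first order along the two transverse branches shows that they vanish to order one, so $m=1$. On the other hand the two branch tangents both pass through the node and hence meet, so $x$ lies on $B$; I expect a second-order expansion of the determinant (\ref{eq:4x4det}) to show that $B$ acquires an ordinary double point there, giving $\mu=2$ and a contribution $m\mu=2$ per node.

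At an \emph{ordinary cusp} $\nu(t)$ the differential $d\nu$ vanishes at $t$, so for every pair $\{t,q\}$ the two tangent rows of (\ref{eq:4x4det}) attached to $t$ drop rank and the determinant vanishes identically along the entire curve $C_t=\{\{t,q\}:q\in\hat C\}\subset S_2$. Thus $C_t$ is a whole component of the stationary bisecant locus, and its image $\tilde\phi(C_t)$ is the family of lines joining the cusp to the points of $\bar C$, whose union is the cone over $\bar C$ with vertex at the cusp; since the cusp is a double point, projection from it sends $\bar C$ to a plane curve of degree $d-2$, so this cone has degree $d-2$ and is a genuine component of the edge surface. A local analysis at the diagonal point $\{t,t\}$ then contributes $m\mu=2$ at each cusp as well. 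Summing, $\sum_i m_i\mu_i=2n+2k$, which yields the stated degree $2(d-3)(d+g-1)-2n-2k$; note that, because $C_t$ is carried inside $\tilde B$, this total already counts the $k$ cusp cones of degree $d-2$.

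The main obstacle is precisely this local multiplicity bookkeeping: rigorously establishing $m_i\mu_i=2$ at every node and cusp, and confirming that the classes $H$ and $B$ retain their smooth-case form on $S_2\hat C$ (the governing intersection numbers $H\cdot C_p=d-1$, $B\cdot C_p=2(d+g-3)$ and the stall count of Remark~\ref{stall} being re-derived on the normalization). This is where the hypothesis that the singularities are ordinary --- no plane meeting the curve with local intersection multiplicity exceeding $4$ --- does the real work: it controls how the minors $u_{ij}$ and the determinant (\ref{eq:4x4det}) degenerate near each singular point and guarantees that only the $k$ cusp cones, and no further spurious or non-reduced components, arise. The remaining check is that $\tilde\phi$ restricts to a birational map from the genuine stationary bisecant curve onto the main component of the edge surface, so that no extra multiplicity enters the intersection number $\tilde H\cdot\tilde B$; generality of $\bar C$ should secure this.
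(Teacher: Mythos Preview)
Your overall architecture is the same as the paper's: blow up the symmetric square $S_2\hat C$ at the points over the singularities, compute the resolved classes $\tilde H$ and $\tilde B$, and read off the degree as $\tilde H\cdot\tilde B=2(d-3)(d+g-1)-\sum m_i\mu_i$. Where you diverge is in how you determine the local numbers $m_i$ and $\mu_i$.

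You propose to extract $m_i=1$ by Taylor-expanding the Pl\"ucker minors and $\mu_i=2$ by a second-order expansion of the determinant (\ref{eq:4x4det}) (at nodes), with an analogous but unstated local analysis at cusps. The paper instead argues geometrically. For $m_i$: the tangent cone at an ordinary node or cusp spans a unique plane (the one meeting $\bar C$ with local multiplicity $4$), and the exceptional curve $E$ maps under the secant map to the pencil of lines in that plane through the singular point; this pencil is a \emph{line} in $\G(2,4)$, so $H\cdot E=1$. For $\mu_i$: rather than expanding the determinant, the paper introduces the cone over $\bar C$ with vertex at the singularity, notes that its class on $\tilde S_2$ is $C_p-E$, and computes $(B-aE)\cdot(C_p-E)=2(d+g-3)-a$; on the other hand, by Hurwitz applied to the projection from a nodal or cuspidal tangent line (degree $d-3$), this intersection number equals the ramification $2(d+g-4)$, forcing $a=2$.

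Both routes reach $m_i\mu_i=2$, but the paper's avoids the delicate local expansions you flag as ``the main obstacle'' and replaces them by two clean global facts (a pencil is a line in the Grassmannian; Hurwitz on a degree-$(d-3)$ projection). Your approach would work in principle, but at cusps especially --- where the base point sits on the diagonal and $C_t$ is itself a component of $B$ --- the bookkeeping you leave implicit (what exactly are $m$ and $\mu$ there?) is precisely what the paper's cone-curve trick sidesteps. The identification of the cusp cones as components, and their degree $d-2$ via projection from a double point, is the same in both.
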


 \begin{proof}
Consider  an ordinary node or an ordinary cusp on $\bar C$, and let
$(p,q)$ (resp. $(p,p)$) be 
the point on $S_{2}$ corresponding to the points on the normalization  
of  $\bar C$ lying over the node (resp. cusp).  
 Let $\tilde S_{2}$ be the blowup $S_{2}$ in $(p,q)$ (resp. $(p,p)$) with exceptional divisor $E$, and let $C_{p}$ and $\Delta$ be the pullback of the respective classes from $S_{2}$.
Let $B$ denote the class on $\tilde S_{2}$ of the total transform of the curve of stationary bisecants on $S_{2}$, and let $H$ denote the class on $\tilde S_{2}$ of a hyperplane section.
Now, the tangent cone at the node (resp. cusp) spans a plane, the unique plane with intersection multiplicity $4$ at the singularity. 
The pencil of lines in this plane through the singular point form 
the image of the exceptional divisor $E$ under the secant map.
 This pencil forms a line in $\G (2,4)$, so $H\cdot E=1$.  
On the other hand, $H\cdot C_{p}=d$ and
  $H\cdot 2\Delta=2d+2g-2$ as before, by
Hurwitz' formula for the normalized curve. Therefore $H\equiv dC_{p}-\Delta-E$.
  Next,  the strict transform of the curve of stationary bisecants  
on the blowup of $S_{2}$ clearly lies in the class of $B-aE$ for some 
positive integer $a$.   The computations of 
 $(B-aE)\cdot C_{p}$ and $(B-aE)\cdot 2\Delta$ are not changed from 
the argument prior to Proposition  \ref{degree}
since we work on the normalization of $\bar C$:
   A secant  through the 
point $p$ belongs to the curve of stationary bisecants on $S_{2}$ only if the tangent at $p$ and the tangent at some other point on the secant  span a plane, 
 while a tangent line at $p$ is a stationary bisecant only if there is third order contact with the branch of $\bar C$ at $p$ with a plane. 

For either singularity, the cone that contains the curve and has its vertex at the singularity has degree $d-2$. The corresponding curve in the blowup of $S_{2}$ 
  lies in the class of $C_{p}-E$.
 The projection from a nodal (resp. cuspidal) tangent 
has degree $d-3$ and ramification (by Hurwitz) of degree $2(d+g-4)$.  This degree counts the number of 
 tangent lines that meet the projection center.  Since the singularities are ordinary, none of these lines are tangent at the singularity.  
The cone curve and the curve of 
stationary bisecants intersect only away from $E$, and 
with intersection number $(B-aE)\cdot (C_{p}-E)=2(d+g-3)-a=2(d+g-4)$.
   Therefore $a=2$, and the degree of the edge surface drops by $2$ 
compared to a smooth curve.

When the singularity is a cusp, the tangent to the normalization 
at the cusp is contracted on $\bar C$, 
so any secant  through the vertex is necessarily a stationary bisecant.  
This curve of secants  form a cone of degree $d-2$ that is a component of 
the edge surface. 

The arguments used in this computation depend only on the local data 
of the singularities, so, as long as the curve $\bar C$ is irreducible, 
the arguments extend to several nodes and cusps.
 \end{proof}

\begin{figure}
\includegraphics[width=9.2cm]{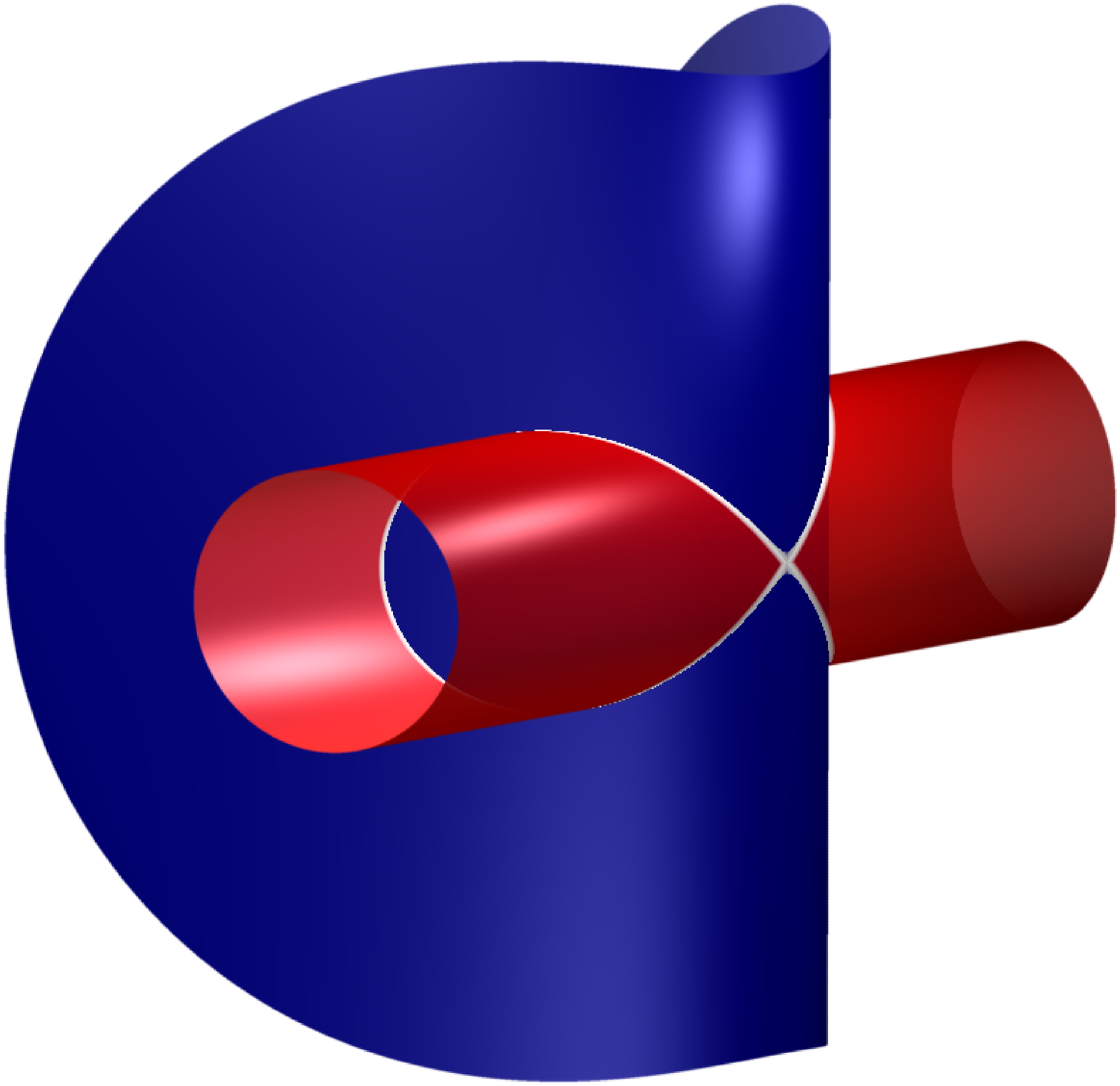} 
\hskip -2.3cm
\includegraphics[width=9.2cm]{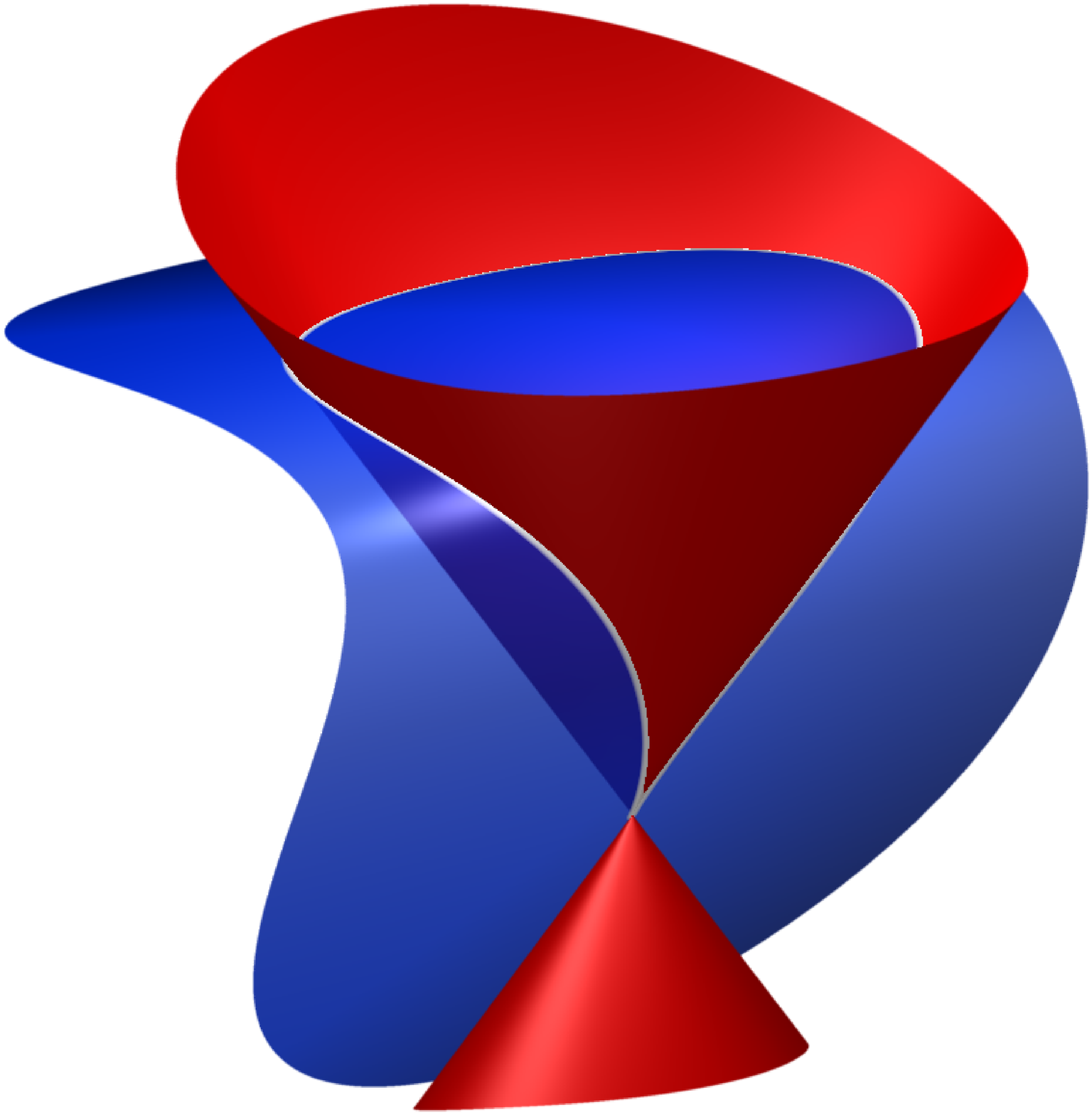}
\vskip -0.4cm
\caption{A nodal and a cuspidal rational quartic curve.}
\label{fig:labs}
\end{figure}

 \begin{example}
Figure \ref{fig:labs} depicts
the edge surface of a rational quartic curve with one 
ordinary singular point.
The edge surface has degree $4$, the value
obtained for $d=4, g=0, n+k=1$ in Theorem \ref{thm:formula}.
It is the union of two quadric cones whose
intersection equals the curve.
If the singularity is an ordinary cusp, then one of the two 
quadrics has its vertex at the cusp. If the singularity is
an ordinary node, then there
are three quadric cones that contain the curve.  
One has its vertex in the node, 
but the edge surface is formed by the other two.
\qed  \end{example}

\begin{remark}
The singularities seen in some of our earlier examples are
not ordinary, and Theorem  \ref{thm:formula} does not apply there.
Henrion's curve in Example \ref{ex:Henrion} has a node in 
the plane at infinity. 
Both branches have intersection multiplicity three with the plane,
so this node is not
ordinary.
     The curve of stationary bisecants has a triple point at 
    the corresponding point of $S_{2}=\C\PP^2$.
Thus the degree of the edge surface is reduced by three from the expected $30$.

The edge surface in Example \ref{ex:runningex}
(shown in Figure~\ref{fig:frank}) has three components, namely two cones 
of degree $2 $ and $3$ respectively, and one component of degree $16$.
 The curve is the complete intersection of the two cones and has two double points.  
 One double point is a node at the vertex of the quadric cone.  The two branches at the point span a plane, but both branches have intersection multiplicity three with this 
 plane so the node is not ordinary.  The other double point lies in the plane at infinity and has two branches with a common tangent.
  The curve of stationary bisecants in $\C\PP^2$
  has three components, two lines corresponding to the two cones 
and one quartic curve.   The secant map has two basepoints on $\C\PP^2$, corresponding to the two singularities.
   The quartic curve has nodes at these basepoints. This explains 
the degree $16$ of the third component.
\qed \end{remark}

These examples illustrate the general fact that the 
dual variety of the edge surface is a curve in the dual $\C \PP^3$.
This is well known, but for lack of reference we include a short proof.
As above, cuspidal points on the curve $\bar C$ play a particular role.  Here we take a cusp to mean any point $P\in\bar C$ such that
 the normalization $\tilde C\to\bar C$ is ramified over $P$, i.e. 
there is some point $Q\in \tilde C$ at which the
normalization is not an immersion. 

\begin{proposition}\label{comp} The variety dual to the edge surface
of any space curve is a curve.  
In particular, each component of the edge surface 
is either a cone or the tangent developable of a curve. 
A cone is a component of the edge surface if and only if it is a cone 
of secants  with vertex at a cusp, or the general ruling intersects the curve 
twice outside the vertex. 
 \end{proposition}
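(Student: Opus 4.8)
The plan is to prove that the edge surface is \emph{developable}, i.e. that its Gauss map is constant along each ruling; the first two assertions then follow from the classical classification of developable surfaces in $\C\PP^3$. Following the setup of Section \ref{trig}, I would work on the symmetric square $S_2$ of the normalization $\tilde C$ and parametrize the scroll locally by $\Psi(s,t)=(1-t)P(s)+t\,Q(s)$, where $P(s)=\gamma(p(s))$ and $Q(s)=\gamma(q(s))$ are affine lifts of the parametrization $\gamma:\tilde C\to\C\PP^3$ and $s\mapsto(p(s),q(s))$ traces the curve $B$ of stationary bisecants. Along the ruling $L_s$ the tangent plane of the scroll is $\mathrm{span}\{P,Q,\partial_s\Psi\}$ with $\partial_s\Psi=(1-t)P'+t\,Q'$, where $P'\propto\gamma'(p)$ and $Q'\propto\gamma'(q)$ span the tangent directions. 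The decisive point is that the stationarity condition — vanishing of the determinant~(\ref{eq:4x4det}) — says exactly that the tangent lines $T_p\bar C$ and $T_q\bar C$ are coplanar, so $\dim\mathrm{span}\{P,P',Q,Q'\}=3$. Hence $\partial_s\Psi\in\mathrm{span}\{P',Q'\}$ lies in the fixed \emph{bitangent plane} $H_s=\langle P,P',Q,Q'\rangle$ for every $t$, and the tangent plane is contained in, and by dimension equal to, $H_s$ independently of $t$. Thus the Gauss map is constant on each ruling, the dual variety is the image of $B$ under $s\mapsto H_s$, and (since $H_s$ is not constant for a genuine space curve) this image is a curve.

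Granting developability, each irreducible component of the edge surface has one-dimensional dual, so by the classification of developable surfaces it is either a cone — the case where the dual curve is degenerate, contained in a plane of $(\C\PP^3)^*$, equivalently all its rulings pass through a common vertex — or the tangent developable of the curve swept out by its edge of regression. This yields the second assertion.

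For the last assertion I would analyze a cone component $X\subseteq E$ with vertex $V$, whose rulings are stationary bisecants through $V$. If the general ruling meets $\bar C$ in two points distinct from $V$ we are in the second alternative. Otherwise a general ruling meets $\bar C$ once at $V$ and once elsewhere, so $V\in\bar C$; I would then argue $V$ must be a cusp. If $V$ were a smooth point, stationarity of the secant $Vq$ would force $T_q\bar C$ to meet the fixed line $T_V\bar C$, and projecting from $T_V\bar C$ shows this occurs for only finitely many $q$ — no one-parameter family, hence no cone. At a cusp $V=\gamma(Q)$ the differential $d\gamma_Q$ vanishes, so the rows of~(\ref{eq:4x4det}) coming from $Q$ vanish and the determinant is identically zero: \emph{every} secant through $V$ is stationary, producing the degree $d-2$ cone already identified in the proof of Theorem \ref{thm:formula}, which is therefore a component. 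This is the first alternative. Both types genuinely occur — the cuspidal cones of Figure \ref{fig:labs} realize (a), the four quadric cones of Example \ref{ex:ellipspace} realize (b) — and the vertex analysis shows the dichotomy is exhaustive, giving the equivalence.

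The main obstacle is the first step: verifying that stationarity forces the Gauss map to be constant \emph{all along} each ruling rather than merely at a single point. Everything afterward is either the standard classification of developables or a local computation at the vertex, but identifying the bitangent plane $H_s$ as the tangent plane of the scroll along the entire ruling is the crux, and it is precisely there that the ``stationary'' hypothesis enters.
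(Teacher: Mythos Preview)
Your argument is correct and reaches the same conclusions, but your route to developability differs from the paper's. You compute the Gauss map directly from the parametrization $\Psi(s,t)$ and observe that the embedded tangent plane $\mathrm{span}\{P,Q,(1-t)P'+tQ'\}$ is contained in---hence by dimension equal to---the $3$-dimensional space $\mathrm{span}\{P,P',Q,Q'\}$ singled out by stationarity; this is elementary and makes the geometry transparent. The paper instead passes to a minimal desingularization $\tilde S_0\to S_0$ of a component and argues intersection-theoretically: the bitangent plane pulls back to a divisor $A+b\tilde L$ that must be singular at \emph{two} points of the ruling $\tilde L$, and since $A$ meets a general ruling at most once one is forced to $b>1$, i.e.\ tangency along the whole ruling. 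The paper's route avoids local coordinate choices and, by working on $\tilde S_0$, sidesteps the issue that the scroll is highly singular (so the ``tangent plane'' is really that of a resolution); your parametrization implicitly does the same thing but you might flag it. For the characterization of cone components both proofs use the identical idea: projection from the tangent line $T_V\bar C$ would be ramified at every $q$ with $T_q\bar C$ coplanar to $T_V\bar C$, and Hurwitz forbids an everywhere-ramified cover of $\C\PP^1$.

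One small correction in the cusp case: the two rows of (\ref{eq:4x4det}) coming from $Q$ do not \emph{vanish}; by Euler's relation $x_{q0}\,\partial F/\partial x_{q0}+x_{q1}\,\partial F/\partial x_{q1}=d\cdot F(x_q)$ they always span at least the line through $F(x_q)$. What happens at a cusp is that the two rows become linearly \emph{dependent} (both proportional to $F(x_q)$), which is enough to make the $4\times4$ determinant vanish identically in $p$, so your conclusion stands unchanged.
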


 \begin{proof} 
Let $B_{0}$ be a component of the curve of stationary bisecants  and let $\tilde S_{0}\to S_{0}$ be a minimal desingularization 
of the corresponding component $S_{0}$ of the edge surface.  Then $\tilde S_{0}$ is 
a $\C\PP^1$-bundle over the normalization $\tilde B_{0}$ of  $B_{0}$ with a birational 
morphism onto the scroll $S_{0}$ that contains $\bar C$ in $\C\PP^3$.
Let $L$ be a general line in the ruling of this scroll, and assume first that $L$ does not pass through a cusp on $\bar C$.  
 Then  $L$ is a secant  between distinct points on $\bar C$ whose tangents 
span a plane $P$.   The pullback of this plane on $\tilde S_{0}$ is a curve $H$ that decomposes into $H=A+b\tilde L$ for some 
positive integer $b$ and is singular at the two points of tangency on the pullback $\tilde L$ of the line $L$.  

Now, the curve $H$ is singular at a point of $\tilde L$ only if this is a point of intersection between the component $A$ and $\tilde L$ or $b>1$.
 But $H$ will intersect the general ruling in only one point, so $A$  cannot intersect $\tilde L$ in more than one point.
Therefore $A+b\tilde L$  is singular along the whole ruling, i.e. $b>1$ and the plane $P$ is tangent to the scroll along $L$.
If $L$ is a secant line  through a cusp, the component of the edge surface is a cone of secants  
with vertex at a cusp, and the plane $P$ is tangent to the scroll along $L$.
 Thus the tangent plane 
is constant along each ruling, 
and the dual variety is a curve. 
To complete the proof,  we consider biduality:  Our scroll is the dual variety of a curve, so it is a cone if the curve is planar,
 and it is the tangent developable of the curve of osculating planes if the curve is not planar. 
 
 Since any cone has constant tangent planes along the rulings, a cone is a component of the edge surface for any curve that meet the general ruling in at least two 
 distinct points outside the vertex.  The only other way a cone could be a component of the edge surface is when the vertex is on
  the curve and the general ruling intersect the curve in one more point. To see this, we assume the vertex point is not a cusp on the curve.
 By assumption, each secant line from the vertex point is a 
stationary bisecant line, so there is a plane for each tangent to the curve that contains a tangent line at the vertex.  
  Therefore the projection of the curve from a tangent line at the vertex  is ramified everywhere on the curve.
  This is impossible over $\C$, and the proof is complete.
 \end{proof}

Even when the curve $\bar C$ is generic and smooth, the edge surface is
always highly singular \cite{Sed}. First of all, 
the edge surface has multiplicity $2(d+g-3)$ along $\bar C$ itself.
This number can be derived by applying Hurwitz' formula to the 
projection from a tangent line. In addition, the singular
locus of the edge surface has two further
$1$-dimensional components.  The edge surface is in general the tangent developable of a curve.  
A plane section of the edge surface have cusps at this curve, so the edge surface has a cuspidal edge along this curve. 
Finally the edge surface has in general an additional double curve, where two sheets of the surface intersect transversally.
We compute the degrees of the cuspidal edge and of the double curve.   

\begin{proposition}
Let $\bar C$ be a general smooth curve of degree $d$ and genus $g$.
The edge surface, which is reduced and irreducible,
 has multiplicity $2(d+g-3)$ along $\bar C$ and the 1-dimensional singular locus 
contains in addition a cuspidal edge of degree $6((d+g-3)^2-4g)$ and a double curve of degree
$2d^4+4d^3g+2d^2g^2-18d^3-14dg^2-32d^2g+46d^2+52dg+8g^2-6d+64g-72$.
\end{proposition}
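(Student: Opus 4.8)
The first two assertions I would dispatch quickly. Reducedness and irreducibility are already contained in Theorem~\ref{thm:degreesmooth}: the edge surface is the image of the generically injective secant map restricted to the (for general $\bar C$ irreducible) curve $B\subset S_2$ of stationary bisecants, hence it is reduced and irreducible. For the multiplicity along $\bar C$ I would count the sheets of the surface through a general point $p\in\bar C$, i.e. the stationary bisecants through $p$. Projecting $\bar C$ from the tangent line $T_p$ gives a map of degree $d-2$ to $\C\PP^1$ whose ramification points are exactly the points $q$ with $T_q\cap T_p\neq\emptyset$, that is, the stationary bisecants $pq$; Hurwitz' formula yields $2(d-2)+2g-2=2(d+g-3)$ of them. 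This number is $B\cdot C_p$, and it is the asserted multiplicity $m_0:=2(d+g-3)$.

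My plan for the two singular curves is to read their degrees off a general plane section $\Sigma=S\cap P$. Since $S$ is the tangent developable of a space curve (Proposition~\ref{comp}), its one-dimensional singular locus consists of the cuspidal edge (the edge of regression), a residual double curve along which two sheets cross transversally, and $\bar C$ itself. Thus $\Sigma$ is a plane curve of degree $n=\deg S=2(d-3)(d+g-1)$ whose normalization is $B$, of geometric genus $p=g(B)$, and whose only singularities are $d$ ordinary points of multiplicity $m_0$ (where $P$ meets $\bar C$), together with $\kappa$ cusps (where $P$ meets the cuspidal edge) and $\delta$ nodes (where $P$ meets the double curve). Then the two degrees I want are precisely $\kappa$ and $\delta$.

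To compute $\kappa$ I would use the Plücker relations for $\Sigma$. The class of $\Sigma$ equals the degree of the dual curve $\Gamma=S^{*}$: because $S$ is developable its tangent plane is constant along each ruling, so the tangent lines of $\Sigma$ are the traces $H_t\cap P$ of the stationary planes, and the number of them through a general point is the number of bitangent planes of $\bar C$ through a general point of $\C\PP^3$, namely $\deg\Gamma$. By De Jonqui\`eres' formula~(\ref{eq:dejonquieres}) with $a=(2,1)$, $n=(2,d-4)$, $s=2$, this class is the coefficient of $t_1^2t_2^{d-4}$ in $(1+4t_1+t_2)^g(1+2t_1+t_2)^{d-2-g}$, which evaluates to $n^{*}=2d^2+4dg+2g^2-10d-14g+12$. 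The genus $p=g(B)$ I would obtain by adjunction on $S_2$: from $K_{S_2}\equiv 2(g-1)C_p-\Delta$, which follows by applying adjunction to the diagonal $2\Delta\cong\bar C$ and to $C_p\cong\bar C$ (giving $\Delta\cdot K_{S_2}=3(g-1)$ and $C_p\cdot K_{S_2}=2g-3$), together with $B\equiv 2(d+g-1)C_p-4\Delta$, the relation $2p-2=B\cdot(B+K_{S_2})$ gives $p=2d^2+6dg+4g^2-15d-31g+28$. Since the ordinary multiple points cancel when one subtracts twice the genus formula from the class formula, one still has $\kappa=2n+2p-2-n^{*}$, and substituting the three expressions collapses to $\kappa=6\bigl((d+g-3)^2-4g\bigr)$, the claimed cuspidal-edge degree.

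For the double curve I would use the Plücker genus formula for $\Sigma$, now retaining the multiple points, namely $p=\binom{n-1}{2}-d\binom{m_0}{2}-\kappa-\delta$. Solving for $\delta$ and inserting $n$, $m_0=2(d+g-3)$, and the values of $\kappa$ and $p$ already found gives, after simplification, the stated degree $2d^4+4d^3g+2d^2g^2-18d^3-14dg^2-32d^2g+46d^2+52dg+8g^2-6d+64g-72$. The main obstacle, and the only genuinely geometric input beyond this enumerative bookkeeping, is to verify that for a general smooth $\bar C$ the section $\Sigma$ has exactly these singularities and no worse: that the $m_0$ sheets of $S$ along $\bar C$ meet $P$ in an \emph{ordinary} $m_0$-fold point, that the cuspidal edge and double curve are reduced and meet $P$ transversally in simple cusps and nodes, and that $B$ is smooth so that its arithmetic and geometric genera agree. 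Granting this transversality, which is consistent with Sedykh's classification \cite{Sed} of the generic singularities, the Plücker and adjunction computations above together with Hurwitz' and De Jonqui\`eres' formulas complete the argument, the remaining work being the routine polynomial algebra.
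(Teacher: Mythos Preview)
Your argument is correct and follows essentially the same route as the paper: the multiplicity along $\bar C$ via Hurwitz, the genus of $B$ via adjunction on $S_2\bar C$, and the double-curve degree via the double-point (genus) formula for a general plane section $\Sigma$ with its $d$ ordinary $m_0$-fold points and $\kappa$ cusps. The only cosmetic difference is in obtaining the cuspidal-edge degree: the paper computes it directly as the degree of the curve dual to $\Gamma=S^{*}$ using Johnsen's characterization of cusps, whereas you extract it from the Pl\"ucker relation $\kappa=2n+2p-2-n^{*}$ on $\Sigma$ (which is valid since ordinary multiple points contribute no ramification to a generic projection); both routes use De Jonqui\`eres for $\deg\Gamma=n^{*}$ and give $6((d+g-3)^2-4g)$, and your expression $p=2d^2+6dg+4g^2-15d-31g+28$ for the genus of $B$ is the correct expansion of the adjunction computation.
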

\begin{proof} The dual variety of the edge surface is a curve.  The strict dual to this curve, the curve of osculating planes, is the cuspidal edge of the edge surface.  
Hence both the dual curve and the cuspidal edge are birational to the curve of stationary bisecants. Their common geometric genus is found to be 
$\gamma=2(d{+}g{-}2)(d{+}2g{-}4)+d-7g-4$ by adjunction on the 
symmetric square $S_{2}{\bar C}$.    The degrees of these curves are 
computed by combining de 
Jonqui\`eres' formula with a characterization of the cusps as in 
\cite[Remarks 5.1 and 5.2]{J}.
The degree of the double curve is computed using the double point formula for a plane curve.  The general plane section
  of the edge surface has geometric genus  $\gamma$, it has 
$d$ points of multiplicity $2(d{+}g{-}3)$ and $6((d{+}g{-}3)^2-4g)$ cusps, 
so the formula for the number of double points follows.
  \end{proof}

\bigskip

\noindent
{\bf Acknowledgments.}
We are grateful to Oliver Labs and
Philipp Rostalski for helping us with the diagrams.
Figures \ref{fig:quartic} and
 \ref{fig:labs} were drawn with Labs' software {\tt Surfex},
which is freely available at {\tt www.surfex.algebraicsurface.net}.
We thank Melody Chan, Joao Gouveia, Trygve Johnsen, Clint McCrory, Ragni Piene, Frank Sottile and Cynthia Vinzant for helpful discussions.
Bernd Sturmfels was supported in part by NSF grant DMS-0757207.

\bigskip


\begin{thebibliography}{99}

\bibitem{ACGH}
E.~Arbarello, M.~Cornalba, P.A.~Griffiths and J.~Harris:
{\em Geometry of Algebraic Curves}, Vol.~I, Springer, New York, 1985.

\bibitem{ABT} E.~Arrondo, M.~Bertoloni and C.~Turrini:
A focus on focal surfaces, {\em Asian J.~Math.} {\bf 5} (2001) 535--650.

\bibitem{BF} J.J.N.~Ballesteros and M.C.R.~Fuster: Curves with no
tritangent planes in space, {\em Journal of Geometry} {\bf 39} (1990) 
120--129.

\bibitem{BGM} T.~Banchoff, T.~Gaffney and C.~McCrory:
Counting tritangent planes of space curves,
{\em Topology} {\bf 24} (1985) 15--23.

\bibitem{BaMo} W.~Barth and R.~Moore:
On rational plane sextics with six tritangents.
In {\em Algebraic Geometry and Commutative Algebra}, Vol. I, 
pages 45-58. Kinokuniya, Tokyo, 1988.

\bibitem{CKKS} J.~Cantarella, G.~Kuperberg, R.B.~Kusner and J.M.~Sullivan:
The second hull of a knotted curve,
{\em American Journal of Mathematics} {\bf 125} (2003) 1335--1348. 

\bibitem{Cay} A.~Cayley:
M\'emoire sur le courbes \`a double courbure et les surfaces d\'eveloppables,
{\em J. de Math.~Pures and Appl.} {\bf 10} (1845) 245--250; 
also {\em Math. Papers} {\bf 1}, 207-211.

\bibitem{Fre} M.H.~Freedman: Planes triply tangent to curves with non-vanishing
torsion,  {\em Topology} {\bf 19} (1980) 1--8.

\bibitem{GPT} J.~Gouveia, P.~Parrilo and R.~Thomas: Theta bodies of
polynomial ideals, {\em SIAM J.~Optimization} {\bf 20} (2010) 2097--2118. 

\bibitem{M2} D.~Grayson and M.E.~Stillman:
Macaulay2, a software system for research in algebraic geometry,
available at 
{\tt http://www.math.uiuc.edu/Macaulay2/}.

\bibitem{Ha} R.~Hartshorne: {\em Algebraic Geometry}, 
GTM 52, Springer, New York, 1977.

\bibitem{Hen} D.~Henrion: Semidefinite representation of convex hulls of 
rational varieties, {\tt arXiv:0901.1821}.

\bibitem{J} T.~Johnsen:
Plane projections of a smooth space curve, in {\em Parameter Spaces}, 
Banach Center Publications, Volume 36, (1996) 89-110.

\bibitem{Mor} H.R.~Morton: Trefoil knots without tritangent planes,
{\em Bull.~London Math.~Soc.} {\bf 23} (1991) 78--80.

\bibitem{NPS} T.~Netzer, D.~Plaumann and M.~Schweighofer:
Exposed faces of semidefinite representable sets, 
{\em SIAM J.~ Optimization} {\bf 20} (2010) 1944--1955.

\bibitem{RS} K.~Ranestad and B.~Sturmfels: The convex hull of a variety, 
to appear in the Julius Borcea Memorial Volume (eds. Petter Br\"and\'en, 
Mikael Passare, Mihal Putinar), {\em Trends in Mathematics}, Birkh\"auser 
Verlag, {\tt arXiv:1004.3018}.

\bibitem{SSS} R.~Sanyal, F.~Sottile and B.~Sturmfels:
Orbitopes, {\tt arXiv:0911.5436}.

\bibitem{Sed} V.D.~Sedykh:
Singularities of the convex hull of a curve in $\R^3$,
Functional Anal.~Appl.~{\bf 11}  (1977) 72--73.

\bibitem{SEJK} J.-K.~Seong, G.~Elber, J.K.~Johnstone and M.-S.~Kim:
The convex hull of freeform surfaces, {\em Computing} {\bf 72} (2004) 171--183.

\bibitem{JvzG} J. von zur Gathen:
Secant spaces to curves, {\em Canadian J. Math.} {\bf 35} (1983) 589--612.

\medskip

\end{thebibliography}
\end{document}